\newtheorem{theorem}{Theorem} 
\newtheorem{lemma}[theorem]{Lemma}
\newtheorem{corollary}[theorem]{Corollary}
\newcommand{\A}{\mathcal{A}}
\newcommand{\C}{\mathcal{C}}
\newcommand{\p}{\mathcal{P}}
\newcommand{\E}{\mathcal{C'}}
\newcommand{\R}{\mathcal{R}}
\newcommand{\abs}[1]{\left\lvert{#1}\right\rvert}
\newcommand{\floor}[1]{\left\lfloor{#1}\right\rfloor}
\newcommand{\ceil}[1]{\left\lceil{#1}\right\rceil}
\DeclareMathOperator{\La}{La}
\title{Exact forbidden subposet results using chain decompositions of the cycle}
\author{Abhishek Methuku\\
\small Department of Mathematics\\[-0.8ex]
\small Central European University\\[-0.8ex]
\small Budapest, Hungary\\
\small\tt abhishekmethuku@gmail.com\\
\and
Casey Tompkins\\
\small Department of Mathematics \\[-0.8ex]
\small Central European University\\[-0.8ex]
\small R\'enyi Institute, Hungarian Academy of Sciences\\[-0.8ex]
\small Budapest, Hungary\\
\small\tt ctompkins496@gmail.com\\
}
\begin{document}

\maketitle

\begin{abstract}
We introduce a method of decomposing the family of intervals along a cyclic permutation into chains to determine the size of the largest family of subsets of $[n]$ not containing one or more given posets as a subposet.   De Bonis, Katona and Swanepoel determined the size of the largest butterfly-free family. We strengthen this result by showing that, for certain posets containing the butterfly poset as a subposet, the same bound holds. We also obtain the corresponding LYM-type inequalities.

\end{abstract}

\section{Introduction}
Let $[n] = \{1,2,\dots,n\}$ and $2^{[n]}$ be the power set of $[n]$.  Let $\binom{[n]}{r}$ denote the collection of all $r$-element subsets of $[n]$.    The sum of the $k$ largest binomial coefficients of the form $\binom{n}{i}$ is denoted by $\Sigma(n,k)$. A collection of sets, $\mathcal{A} \subset 2^{[n]}$, may be viewed as a partially ordered set (poset) with respect to inclusion.

Given two posets, $P$ and $Q$, we say that $P$ is a (weak) subposet of $Q$ if there is an injection, $\phi$, from $P$ to $Q$ such that $x \le y$ in $P$ implies $\phi(x) \le \phi(y)$ in $Q$.  If we also have that $\phi(x) \le \phi(y)$ implies $x \le y$, then $P$ is called an induced subposet of $Q$. 

For a collection of posets, $\p$, Katona and Tarjan \cite{katona1983extremal} introduced the function $\La(n,\p)$, defined as the size of the largest collection $\A \subset 2^{[n]}$ not containing any poset $P \in \p$ as a subposet. Analogously, $\La^{\#}(n,\p)$ denotes the size of the largest family $\A \subset 2^{[n]}$ not containing any poset $P \in \p$ as an induced subposet.  In the case that $\p$ contains just one poset, $P$, or two posets, $P$ and $Q$, we simply write $\La(n,P)$ or $\La(n,P,Q)$ instead of $\La(n,\p)$.  Problems of this type are motivated by the famous theorem of Sperner \cite{sperner1928satz}:

\begin{theorem}
Let $\A \subset 2^{[n]}$ be an antichain, then 
\begin{displaymath}
\abs{\A} \le \binom{n}{\floor{n/2}}.
\end{displaymath}
Moreover, equality is attained if and only if $\A$ consists of a full level of size $\floor{n/2}$ or $\ceil{n/2}$.
\end{theorem}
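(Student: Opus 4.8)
The plan is to prove the bound via Lubell's maximal-chain counting argument (the permutation method), which simultaneously yields the sharper LYM inequality $\sum_{A \in \A} \binom{n}{\abs{A}}^{-1} \le 1$; the stated bound is then an immediate consequence, since every summand is at least $\binom{n}{\floor{n/2}}^{-1}$, so that $\abs{\A}\binom{n}{\floor{n/2}}^{-1} \le 1$. This route fits the chain-decomposition philosophy of the paper and makes the equality analysis transparent.

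First I would set up the correspondence between maximal chains $\emptyset = C_0 \subsetneq C_1 \subsetneq \dots \subsetneq C_n = [n]$ in $2^{[n]}$ and permutations of $[n]$, giving exactly $n!$ maximal chains. The key observation is that, because $\A$ is an antichain, any two of its members are incomparable, so each maximal chain contains at most one element of $\A$. On the other hand, a fixed set $A$ with $\abs{A} = k$ lies on exactly $k!\,(n-k)!$ maximal chains (choose an order in which to build up $A$, then an order for the remaining elements). Counting incidences of (maximal chain, member of $\A$ on it) and comparing with the total number of chains gives $\sum_{A \in \A} \abs{A}!\,(n - \abs{A})! \le n!$; dividing by $n!$ yields the LYM inequality, and hence the bound.

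For the equality characterization, note that $\abs{\A} = \binom{n}{\floor{n/2}}$ forces equality throughout $\abs{\A}\binom{n}{\floor{n/2}}^{-1} \le \sum_{A \in \A} \binom{n}{\abs{A}}^{-1} \le 1$. The first equality forces $\binom{n}{\abs{A}} = \binom{n}{\floor{n/2}}$ for every $A \in \A$, i.e. every member of $\A$ has size $\floor{n/2}$ or $\ceil{n/2}$; the second forces every maximal chain to meet $\A$ in exactly one set. When $n$ is even these two levels coincide and $\A$ must be the whole middle level, so we are done. When $n$ is odd, I would argue as follows: every maximal chain passes through a unique covering pair $X \subsetneq Y$ with $\abs{X} = (n-1)/2$ and $\abs{Y} = (n+1)/2$, and ``exactly one element of $\A$ per chain'' translates into ``exactly one of $X, Y$ belongs to $\A$ for every such covering pair''. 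Viewing inclusion between the two middle levels as a bipartite graph, this says precisely that $\A$ is one colour class of a proper $2$-colouring of that graph.

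The main obstacle is therefore the final odd case: I would need to show that the middle-levels inclusion graph is connected, for then its bipartition is unique and the only proper $2$-colourings are the two middle levels themselves, forcing $\A$ to be a single full level. Connectivity can be established by a direct exchange argument---given two $(n-1)/2$-sets, one passes from one to the other by repeatedly adding and deleting single elements while alternating between the two levels---but verifying this carefully is the one genuinely non-routine step, the bound itself being a one-line consequence of the chain count.
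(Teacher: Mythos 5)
The paper does not actually prove this statement: it is Sperner's theorem, quoted purely as classical background and attributed to the original paper of Sperner, so there is no in-paper proof to compare yours against. Judged on its own merits, your proof is correct. The Lubell maximal-chain count is valid (each of the $n!$ maximal chains meets an antichain at most once, while a $k$-set lies on $k!\,(n-k)!$ chains), it gives the LYM inequality, and the bound follows. Your equality analysis is also sound: equality forces $\binom{n}{\abs{A}}=\binom{n}{\floor{n/2}}$ for every $A\in\A$ (so all members lie in the middle level for even $n$, or in one of the two middle levels for odd $n$) and forces every maximal chain to meet $\A$ exactly once; for odd $n$ this is equivalent to saying that every covering pair between the two middle levels has exactly one member in $\A$, i.e.\ $\A$ is a colour class of a proper $2$-colouring of the bipartite inclusion graph between those levels, and the exchange argument you sketch (reduce the symmetric difference of two $\frac{n-1}{2}$-sets by one add-and-delete step, which is a path of length two in the graph) does establish connectivity, hence uniqueness of the bipartition and the conclusion. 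Two minor points: you should state the trivial ``if'' direction (a full middle level is an antichain attaining the bound), and you may note that no isolated vertices occur in the inclusion graph, so the $2$-colouring argument applies to every vertex. It is worth observing that your method is very much in the spirit of the paper: the LYM-type results in Sections 2--4 are proved by exactly the cyclic analogue of your double count (Katona's cycle method, with chain decompositions of the intervals along a cyclic permutation playing the role of Lubell's maximal chains), and the paper itself invokes the equality case of Sperner's theorem in deducing Theorem \ref{skew} from Theorem \ref{skewbutterfly} --- the case your argument supplies a proof for.
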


Erd\H{o}s \cite{erdos1945lemma} extended this theorem by showing that if a family, $\A$, contains no chain of length $k+1$, then $\abs{\A} \le \Sigma(n,k)$.  If we define the $k+1$-path poset $P_{k+1}$ by $k+1$ elements $x_1,x_2,\dots,x_{k+1}$ with the relations $x_1 \le x_2 \le \dots \le x_{k+1}$,  then in our language, Erd\H{o}s's result states that $\La(n,P_{k+1}) = \Sigma(n,k)$.   For a variety of posets, $P$, the value of $\La(n,P)$ has been determined asymptotically. These include the $r$-fork $V_r$ \cite{de2007largest}, all crowns $O_{2k}$ except for $k \in \{3,5\}$ \cite{lu2014crown}, the $N$ poset \cite{griggs2008no} and many others.  An important asymptotic result is a theorem of Bukh \cite{bukh2009set} which yields the asymptotic bound for any tree, $\La(n,T)= (h(T)-1)\binom{n}{\floor{n/2}}(1+ O(1/n))$ where $h(T)$ is the height of the tree. This result was extended to induced trees by Boehnlein and Jiang \cite{boehnlein2012set}.  A central open problem is to determine the asymptotic value of the diamond poset (see, for example \cite{axenovich2012q,kramer2013diamond,manske2013three}).   

Fewer exact results are known.   Already, in their paper introducing the $\La$ function, Katona and Tarjan \cite{katona1983extremal} proved that $\La(n,V,\Lambda) = \La^{\#}(n,V,\Lambda) =  2 \binom{n-1}{\floor{\frac{n-1}{2}}}$, where $V$ and $\Lambda$ are the $2$-fork and $2$-brush, respectively.  Define the butterfly poset, $B$, by $4$ elements $a,b,c,d$ with $a,b \le c,d$. Of central importance to the present paper is a theorem of De Bonis, Katona and Swanepoel \cite{de2005largest} showing that $\La(n,B) = \Sigma(n,2)$.  More recently, several other exact results have been obtained. Burcsi and Nagy \cite{burcsi2013method} obtained the exact bound for multiple posets and introduced a method of creating further posets whose $\La$ function could be calculated exactly. Griggs, Li and Lu \cite{griggs2012diamond}  determined exact results for the $k$-diamond, $D_k$ ($w \le x_1,x_2,\dots,x_k \le z$), for an infinite set of values of $k$.  They also obtained exact results for harp posets, $H(l_1,l_2,\dots,l_k)$, defined by $k$ chains of length $l_i$ between two fixed elements, in the case when the $l_i$ are all distinct.  

Our first new result is a strengthening of the theorem of De Bonis, Katona and Swanepoel on the butterfly poset.  Namely,  we introduce a poset $S$ which contains the butterfly as a strict subposet and prove that, nonetheless, the same bound holds.  This poset, which we call the ``skew''-butterfly, is defined by $5$ elements, $a,b,c,d,e,$ with $a,b \le c,d$ and $b \le e \le d$ (see Figure \ref{skewpic}). 

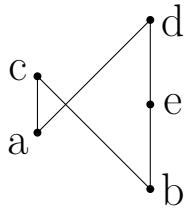
\begin{figure}[h]
\label{skewpic}
\begin{center}
\begin{tikzpicture}[line cap=round,line join=round,>=triangle 45,x=1.0cm,y=1.0cm, scale = 0.75]

\draw (1,1)-- (1,2);
\draw (3,1.5)-- (3,3);
\draw (1,1)-- (3,3);
\draw (1,2)-- (3,0);
\draw (3,1.5)-- (3,0);
\begin{scriptsize}
\fill [color=black] (1,1) circle (2.0pt);
\draw[color=black] (0.65,0.80) node {$\mathbf{\mbox{\Large a}}$};
\fill [color=black] (1,2) circle (2.0pt);
\draw[color=black] (0.65,2.1) node {$\mathbf{\mbox{\Large c}}$};
\fill [color=black] (3,1.5) circle (2.0pt);
\draw[color=black] (3.4,1.5) node {$\mathbf{\mbox{\Large e}}$};
\fill [color=black] (3,3) circle (2.0pt);
\draw[color=black] (3.4,3) node {$\mathbf{\mbox{\Large d}}$};
\fill [color=black] (3,0) circle (2.0pt);
\draw[color=black] (3.4,0) node {$\mathbf{\mbox{\Large b}}$};
\end{scriptsize}
\end{tikzpicture}
\end{center}
\caption{The skew-butterfly poset}
\label{figure 0}
\end{figure}

\begin{theorem}
\label{skew}
Let $n \ge 3$, then we have
\begin{displaymath}
\La(n,S) = \Sigma(n,2).
\end{displaymath}
\end{theorem}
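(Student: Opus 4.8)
The plan is to prove matching lower and upper bounds. For the lower bound I would exhibit an explicit $S$-free family of size $\Sigma(n,2)$: let $\A_0$ be the union of the two levels of $2^{[n]}$ realizing the two largest binomial coefficients. These two levels are consecutive, say levels $k$ and $k+1$, and I claim $\A_0$ contains no butterfly. Indeed, in a two-level family the two minimal elements $a,b$ of any butterfly must lie in the lower level and the two maximal elements $c,d$ in the upper level; but $a,b \subseteq c$ with $\abs{a}=\abs{b}=k$ and $\abs{c}=k+1$ forces $c = a \cup b$, and likewise $d = a\cup b$, so $c=d$, a contradiction. Hence $\A_0$ is butterfly-free, and since $B$ is a subposet of $S$, it is a fortiori $S$-free, giving $\La(n,S)\ge \Sigma(n,2)$.

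For the upper bound I would use the cyclic chain method advertised in the title. Fix a cyclic permutation (necklace) $\sigma$ of $[n]$, call a set an \emph{arc} of $\sigma$ if its elements are consecutive along $\sigma$, and let $\mathcal{I}(\sigma)$ be the family of nontrivial arcs (those of size $1,\dots,n-1$). A fixed $k$-set with $1\le k\le n-1$ is a nontrivial arc of exactly $k!\,(n-k)! = n!/\binom{n}{k}$ of the $(n-1)!$ necklaces (this is where $n\ge 3$ is used, so that $k!(n-k)!\le (n-1)!$), so double counting gives
\[
\sum_{A \in \A}\frac{n!}{\binom{n}{\abs{A}}} \;=\; \sum_{\sigma}\,\abs{\A \cap \mathcal{I}(\sigma)}.
\]
Thus if I can establish the per-necklace bound $\abs{\A\cap\mathcal{I}(\sigma)}\le 2n$ for every $\sigma$ whenever $\A$ is $S$-free, the right-hand side is at most $2n\,(n-1)! = 2\,n!$, and dividing by $n!$ yields the LYM-type inequality $\sum_{A\in\A} 1/\binom{n}{\abs{A}} \le 2$. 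A standard convexity (greedy) argument then shows that any family satisfying this inequality has at most $\Sigma(n,2)$ members, which both proves the theorem and delivers the promised LYM strengthening.

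The crux is therefore the per-necklace bound, and this is where the chain decomposition enters and where the real work lies. I would decompose $\mathcal{I}(\sigma)$ into the $n$ chains $C_i = \{[i,j]:j\}$ of arcs sharing a fixed left endpoint $i$ (each nontrivial arc lies in exactly one $C_i$, and each $C_i$ is totally ordered by inclusion), and set $a_i = \abs{\A\cap C_i}$, aiming for $\sum_i a_i \le 2n$. The essential new difficulty, compared with the butterfly case, is that $S$-freeness is far weaker than butterfly-freeness: a butterfly already embeds into any chain of four sets, so a butterfly-free family has bounded chains for free, whereas $S$ embeds only into a chain of five sets, so an $S$-free family may contain long chains and a single $C_i$ may be large. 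The plan is to show that a tall chain forces its neighbours to be short. If some $C_i$ contains three nested arcs $X\subsetneq Y\subsetneq Z$, I would use them as the spine $b\le e\le d$ of a skew-butterfly (taking $b=X$, $e=Y$, $d=Z$) and then produce the remaining pair $a\le c$ from arcs of $\A$ in other chains that overlap $Z$ appropriately; conversely, the impossibility of such a completion sharply restricts how arcs of $\A$ may cross $Z$. Quantifying this trade-off across all $n$ chains—balancing the contribution of tall chains against the forced scarcity of arcs crossing them—should give $\sum_i a_i\le 2n$, with equality precisely when $\A$ meets $\sigma$ in two full levels of arcs. I expect this balancing argument to be the main obstacle.

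Finally I would tie up the boundary cases. The sets $\emptyset$ and $[n]$ are arcs of every necklace and are excluded from $\mathcal{I}(\sigma)$; since they do not belong to the extremal two-level family, I would either assume $\emptyset,[n]\notin\A$ after checking that their presence cannot increase the optimum (if $\emptyset\in\A$, then $\emptyset$ may play the role of a minimal element $a$, so $\A\setminus\{\emptyset\}$ must additionally avoid the four-element subposet of $S$ on $\{b,c,d,e\}$, a strictly stronger restriction that offsets the extra set), or absorb them by proving the slightly stronger per-necklace statement that counts them. I would also record the equality case and confirm that the hypothesis $n\ge 3$ is exactly what the arc-counting identity and the two-level construction require.
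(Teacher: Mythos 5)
Your proposal has the same skeleton as the paper's proof: two consecutive middle levels for the lower bound, Katona's cycle method to reduce the upper bound to the statement that an $S$-free family of intervals along a cyclic permutation has at most $2n$ members, a chain decomposition of those intervals to prove that statement, and a separate treatment of $\varnothing$ and $[n]$. The parts you actually carry out (the butterfly-free verification of the construction, the double count giving $\sum_{A\in\A}1/\binom{n}{\abs{A}}\le 2$ from the per-permutation bound, and the greedy derivation of $\Sigma(n,2)$ from the LYM inequality) are correct. But the per-permutation bound $\abs{\A\cap\mathcal{I}(\sigma)}\le 2n$ \emph{is} the theorem: it is the entire technical content of the paper (its Lemma \ref{lemma:mainlemma}), and you do not prove it. Your paragraph about using three nested arcs as the spine $b\le e\le d$ and ``quantifying this trade-off \dots should give $\sum_i a_i\le 2n$'' is a statement of intent, and you flag it yourself as the expected main obstacle. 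The paper's proof of this step is a genuine argument: it classifies chains by their intersection pattern with $\A$ (types $0,1,2,3^S,3^R,3^L,4$), proves that a chain of type $4$, $3^R$ or $3^S$ forces the next chain to meet $\A$ in at most one set, proves the analogous statements for $3^L$ and for triples of consecutive chains, and then runs a grouping/averaging argument over the resulting sequence of types.

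A secondary problem is that the decomposition you chose makes the missing argument harder, not easier. You take the $n$ ``long'' chains with a fixed left endpoint (sizes $1,\dots,n-1$). The paper instead uses the zig-zag chains $\{x_i\},\{x_i,x_{i-1}\},\{x_i,x_{i-1},x_{i+1}\},\dots$ of length roughly $n/2$, chosen precisely so that the interactions needed for the averaging are between \emph{consecutive} chains only; that locality is what makes the grouping argument go through. With your chains the constraints are global: if one chain contains four nested arcs $A_1\subset A_2\subset A_3\subset A_4$ of $\A$, then (taking $a=B$, $b=A_1$, $e=A_2$, $c=A_3$, $d=A_4$, or dually $c=B$ with spine $A_2\subset A_3\subset A_4$) no arc of $\A$ in \emph{any} other chain may be contained in $A_3$ or contain $A_2$, yet arcs crossing $A_3$ remain unconstrained, so several chains far apart can still be tall and the bookkeeping you would need has a quite different, and less clear, shape. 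Finally, the boundary cases are also glossed over: your claim that removing $\varnothing$ leaves a family avoiding the $4$-element subposet of $S$ on $\{b,c,d,e\}$ and that this ``offsets the extra set'' is unsubstantiated; at the corresponding step (when $[n]$ together with the whole co-atom level lies in $\A$) the paper needs the equality case of Sperner's theorem plus separate arguments for $n=4$ and $n=3$. In short: right architecture, but the crux of the theorem is left unproven.
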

A construction matching this bound is given by taking two consecutive middle levels of $2^{[n]}$.   With this result (and all of the others) we also get the corresponding LYM-type inequality if we assume $\varnothing$ and $[n]$ are not in the family.

\begin{theorem}
\label{skewbutterfly}
Let $n \ge 3$ and $\A \subset 2^{[n]}$  be a collection of sets not containing $S$ as a subposet, and assume that $\varnothing,[n] \notin \A$, then
\begin{displaymath}
\sum_{A\in \A} \frac{1}{\binom{n}{\abs{A}}} \le 2.
\end{displaymath}
\end{theorem}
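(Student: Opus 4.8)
The plan is to prove the inequality by the cyclic permutation (averaging) method that underlies Theorem \ref{skew}, reducing the global statement to a single counting lemma about arcs of one cyclic arrangement. Arrange $[n]$ around a circle with $n$ labelled positions and call a set $A \in \A$ an \emph{arc} of such an arrangement if its elements occupy consecutive positions. Since $\varnothing, [n] \notin \A$, every $A \in \A$ satisfies $1 \le \abs{A} \le n-1$, and a short count shows that the fraction of the $n!$ arrangements in which a fixed $k$-set is an arc equals $\frac{n \cdot k!(n-k)!}{n!} = \frac{n}{\binom{n}{k}}$ (choose one of the $n$ blocks of $k$ consecutive positions, order $A$ inside and the rest outside).

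Next I would set up the double count of incidences between members of $\A$ and the arrangements in which they are arcs. Summing the quantity above over $A \in \A$,
\[
\sum_{\text{arrangements } \sigma} \abs{\{A \in \A : A \text{ is an arc of } \sigma\}} = n \cdot n! \sum_{A \in \A} \frac{1}{\binom{n}{\abs{A}}}.
\]
Hence it suffices to prove the following local bound: for every cyclic arrangement $\sigma$, an $S$-free family contains at most $2n$ sets that are arcs of $\sigma$. Indeed, this bounds the left-hand side by $2n \cdot n!$ and therefore yields $\sum_{A \in \A} \binom{n}{\abs{A}}^{-1} \le 2$. The extremal configuration, two full consecutive levels of $2^{[n]}$, contributes exactly $2n$ arcs to each arrangement (the arcs of the two relevant sizes), so $2n$ is the sharp local bound to aim for.

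The core of the argument is this local bound, and here is where the chain decomposition of the cycle enters. I would partition the $n(n-1)$ proper arcs of $\sigma$ into $n$ chains $C_0, \dots, C_{n-1}$, where $C_r$ collects the arcs sharing a fixed left endpoint (equivalently, a fixed left boundary gap); each $C_r$ is a nested family of size $n-1$, and every arc lies in exactly one $C_r$. Writing $m_r$ for the number of $\A$-arcs in $C_r$, the target becomes $\sum_r m_r \le 2n$. A first, purely order-theoretic observation gives $m_r \le 4$ for every $r$: any five comparable sets contain $S$ as a weak subposet (map $S$ into them along any linear extension), so a chain cannot carry five members of $\A$. This already gives the weaker bound $4n$, and the whole difficulty is to improve the per-chain contribution from $4$ to $2$ on average.

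The main obstacle is precisely this improvement, and it is where the extra element $e$ of the skew-butterfly, the spine $b \le e \le d$, must be exploited. The mechanism I expect is that whenever some chain $C_r$ carries three nested $\A$-arcs $B \subsetneq E \subsetneq D$, these already realize the spine of $S$; one then only needs a set $A$ with $A \subsetneq D$ and a set $C \supsetneq B$, supplied by arcs lying on \emph{neighbouring} chains, to complete a copy of $S$. Thus a chain with $m_r \ge 3$ forces the arcs on nearby chains to be severely restricted, and a careful analysis of which arcs on the circle can simultaneously be comparable or incomparable (using the interval geometry, namely how the boundary gaps of two arcs interleave) should show that the surplus above $2$ in one chain is paid for by deficits in others, giving $\sum_r m_r \le 2n$. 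Making this exchange argument precise, by enumerating the ways the completing arcs $A$ and $C$ can be avoided and checking that avoidance caps the total at $2n$, is the technical heart of the proof; the reduction in the first two paragraphs is routine once the local bound is in hand.
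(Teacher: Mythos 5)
Your reduction of the theorem to the local bound (that an $S$-free family contains at most $2n$ intervals along any cyclic arrangement) is correct, and it is exactly how the paper derives Theorem \ref{skewbutterfly} from its Lemma \ref{lemma:mainlemma}. But that local bound is the entire content of the paper's Section \ref{S}, and your proposal does not prove it: your last paragraph only describes a mechanism you ``expect'' and explicitly defers making the exchange argument precise, which you yourself call the technical heart. As it stands, this is a plan with the essential lemma missing, not a proof.

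Worse, the plan as stated runs into a concrete obstacle created by your choice of decomposition. With the fixed-left-endpoint chains $C_r$, the cross-relations between consecutive chains are one-directional: an arc of size $k$ starting at $x_{r+1}$ lies inside the arc of size $m$ starting at $x_r$ exactly when $k<m$, while no arc of $C_r$ is ever contained in an arc of $C_{r+1}$ (the latter never contains $x_r$). Consequently a chain carrying $3$ or $4$ members of $\A$ does \emph{not} severely restrict its neighbours: for instance (for $n\ge 9$), the four arcs of sizes $1,2,3,4$ starting at $x_r$ together with the four arcs of sizes $5,6,7,8$ starting at $x_{r+1}$ admit no containments at all across the two chains, so they form an $S$-free family in which two \emph{adjacent} chains each have intersection $4$. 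Hence the surplus of a heavy chain cannot in general be charged to nearby chains; in such configurations the sizes are forced to grow as one moves around the cycle, and the true accounting is global, not local. This is precisely why the paper uses a different decomposition --- the zigzag chains $\{x_i\},\{x_i,x_{i-1}\},\{x_i,x_{i-1},x_{i+1}\},\dots$ of length about $n/2$ --- whose consecutive chains interleave in both directions; that interleaving is what makes the neighbour-restriction statements (Lemma \ref{lemma:technical1}, Lemma \ref{lemma:technical2} and their mirrored corollaries) true, and even then the proof needs the finer classification of $3$-chains into types $3^R$, $3^L$, $3^S$ before the grouping argument closes. A smaller slip in the same paragraph: to complete a copy of $S$ from a spine $B\subsetneq E\subsetneq D$ you need sets $A$ and $C$ with $A\subseteq C$, $A\subseteq D$ \emph{and} $B\subseteq C$; your stated conditions $A\subsetneq D$ and $C\supsetneq B$ alone do not suffice.
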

For the proof of Theorem \ref{skewbutterfly}, we consider the set of intervals along a cyclic permutation (following Katona \cite{katona1972simple}).  We partition these intervals into chains and consider the interactions of consecutive chains in the partition.   The method and the proof of this result are given in Section $2$.

We now mention some notable properties of $S$.  It is one of the two posets whose Hasse diagram is a $5$-cycle.  The other is the harp, $H(4,3)$, and $\La(n,H(4,3))$ was determined exactly in the paper of Griggs, Li and Lu \cite{griggs2012diamond}(the $4$-cycles are $B$ and $D_2$).  The skew-butterfly is contained in the $X$ ($a,b \le c \le d,e$), a tree of height $3$, like $B$, and so its asymptotics are determined by Bukh's theorem.  The exact value of $\La(n,S)$ cannot be determined by the double chain method of Burcsi and Nagy \cite{burcsi2013method} because one can find $5$ sets on a double chain with no copy of $S$.   Finally, if we subdivide any of the edges $ac, ad$ or $bc$ in the Hasse diagram of $S$, we get a poset for which there is a construction of size larger than $\Sigma(n,2)$.

Next, we consider a generalization of De Bonis, Katona and Swanepoel's theorem in a different direction.  If instead of forbidding $B$, we forbid the pair of posets $Y$ and $Y'$ where $Y$ is the poset on $4$ elements $w,x,y,z$ with $w \le x \le y,z$ and $Y'$ is the same poset but with all relations reversed, then $\La(n,Y,Y') = \La(n,B) = \Sigma(n,2)$.  This result is already implicit in the proof of De Bonis, Katona and Swanepoel.  We extend the result by considering the posets $Y_k$ and $Y_k'$ defined by $k+2$ elements $x_1,x_2,\dots,x_{k},y,z$ with $x_1 \le x_2 \le \dots \le x_k \le y,z$ and its reverse (so $Y=Y_2$ and $V=Y_1$).  We prove

\begin{theorem}
\label{Yk}
Let $k \ge 2$ and $n \ge k+1$, then
\begin{displaymath}
\La(n,Y_k,Y_k') = \Sigma(n,k).
\end{displaymath}
\end{theorem}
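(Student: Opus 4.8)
The plan is to establish the two bounds separately. For the lower bound, take the family of all sets whose size is one of the $k$ central values, i.e.\ $k$ consecutive middle levels of $2^{[n]}$; this has size exactly $\Sigma(n,k)$. Since any chain inside $k$ consecutive levels has at most $k$ elements, while both $Y_k$ and $Y_k'$ have height $k+1$, neither can embed, so this family is $(Y_k,Y_k')$-free. For the upper bound I would prove the stronger LYM-type inequality $\sum_{A\in\A}\binom{n}{\abs{A}}^{-1}\le k$ for any $(Y_k,Y_k')$-free $\A$ (with $\varnothing,[n]$ excluded) and deduce $\La(n,Y_k,Y_k')\le\Sigma(n,k)$ in the standard way; the sets $\varnothing,[n]$ are handled by a short separate argument, since they are never proper arcs and so never enter the cyclic count below.

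Following Katona and the method of Section~$2$, fix a cyclic permutation $\sigma$ of $[n]$ and let its \emph{arcs} be the $n(n-1)$ intervals of sizes $1,\dots,n-1$. A set of size $i$ is an arc of exactly $i!(n-i)!$ of the $(n-1)!$ cyclic permutations, so averaging gives
\[
\frac{1}{(n-1)!}\sum_{\sigma}\bigl\lvert\{A\in\A:\ A\text{ is an arc of }\sigma\}\bigr\rvert \;=\; n\sum_{A\in\A}\frac{1}{\binom{n}{\abs{A}}}.
\]
Hence it suffices to show that every $\sigma$ has at most $kn$ arcs in $\A$. I decompose the arcs of $\sigma$ into the $n$ chains $C_1,\dots,C_n$, where $C_j$ consists of the arcs beginning at position $j$ (one of each length), each a maximal chain in the containment order; note that the largest antichain of arcs has size $n$ (a full length class). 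The crucial local fact is that comparabilities between consecutive chains are governed purely by length: the length-$\ell$ arc of $C_j$ contains the length-$m$ arc of $C_{j+1}$ exactly when $\ell>m$, and the two are incomparable otherwise.

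To bound $\abs{\F}$, where $\F=\{A\in\A:\ A\text{ is an arc of }\sigma\}$, I layer $\F$ by \emph{down-height}: let $d(A)$ be the maximal number of elements in a chain of $\F$ with top element $A$. Each level set $\{A:d(A)=i\}$ is an antichain, hence has at most $n$ arcs, so the shallow part $\{A:d(A)\le k-1\}$ contributes at most $(k-1)n$. Everything then reduces to the claim that the deep part $\D=\{A\in\F:d(A)\ge k\}$ has at most $n$ elements. Here both forbidden posets enter: for $A\in\D$ the chain of length $k$ below $A$ together with $Y_k$-freeness forces the up-set $\{B\in\F:B\supsetneq A\}$ to be a chain, and symmetrically $Y_k'$-freeness constrains the down-sets of arcs of large up-height. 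Combining these with the consecutive-chain comparabilities should yield the trade-off that makes the count work: a chain $C_j$ carrying many deep arcs forces its neighbours to carry none, keeping the total over all chains at most $n$.

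The main obstacle is precisely this bound $\abs{\D}\le n$. The naive antichain estimate is not enough, because $\D$ can itself be a long chain (when $\F$ concentrates in a single $C_j$), so one genuinely needs the interaction of consecutive chains to convert ``tall'' behaviour in one chain into emptiness in the adjacent ones. The estimate also cannot rest on a single forbidden poset: the family consisting of $k$ central levels together with one longer arc has a deep part of size $n+1$ and still avoids $Y_k$, and is excluded only because the extra arc creates a copy of $Y_k'$ at the bottom of a $k$-chain. Carefully organising this amortisation across all $n$ consecutive pairs, while tracking the cyclic wrap-around and the extreme lengths $1$ and $n-1$, is the technical heart of the argument; the passage from the resulting LYM inequality to $\La(n,Y_k,Y_k')=\Sigma(n,k)$ is then routine.
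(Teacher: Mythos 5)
Your overall architecture matches the paper's: the $k$-middle-levels construction, the Katona-style double counting that reduces the LYM inequality to the statement that any single cyclic permutation carries at most $kn$ arcs of $\A$, and the routine deduction of $\La(n,Y_k,Y_k')\le\Sigma(n,k)$ are all correct and are exactly how the paper proceeds (the per-permutation bound is the paper's Lemma \ref{lemma:mainlemma2}). But that per-permutation bound is precisely the step you do not prove, and the reduction you propose for it is unsalvageable: you layer the arc family $\F$ by down-height, bound the shallow layers by $(k-1)n$ via Mirsky, and then assert that everything ``reduces to the claim that the deep part $\D=\{A\in\F:d(A)\ge k\}$ has at most $n$ elements.'' That claim is \emph{false}, already for $k=2$. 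Take $n\ge 6$ even, and let $\F$ consist of the $n/2$ singletons $\{x_i\}$ with $i$ odd, all $n$ arcs of length $2$, and the two complementary arcs $B_1=\{x_1,\dots,x_{n/2}\}$ and $B_2=\{x_{n/2+1},\dots,x_n\}$. Each length-$2$ arc contains exactly one chosen singleton (consecutive indices have opposite parity, and $n$ is even so this persists across the wrap-around) and is contained in at most one of $B_1,B_2$ (the arcs $\{x_{n/2},x_{n/2+1}\}$ and $\{x_n,x_1\}$ lie in neither), while $B_1,B_2$ have no supersets in $\F$. Hence every $2$-chain of $\F$ has at most one element of $\F$ strictly above its top and at most one strictly below its bottom, so $\F$ contains neither $Y$ nor $Y'$ even as a weak subposet. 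Yet all $n$ length-$2$ arcs and both $B_i$ have down-height at least $2$, so $\abs{\D}=n+2>n$. (Consistently with the true bound, $\abs{\F}=3n/2+2\le 2n$: the shallow layer here has only $n/2$ elements.) So a layer-by-layer count cannot work; the surplus in deep layers must be amortized against deficits in shallow ones.

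This global trade-off is exactly what the paper's argument supplies, and it is organized around consecutive chains rather than height layers. The paper uses the zigzag decomposition of Section \ref{S} (each chain still has one arc of each length, but consecutive chains interleave in both directions), classifies chains meeting $\A$ in $k+1$ sets as type $k+1^R$ or $k+1^L$ according to the direction of the second largest element, and proves (Lemma \ref{lemmaX}, Corollary \ref{corY}) that such a chain forces the following (respectively preceding) chain to meet $\A$ in at most $k-1$ sets, with a rigidity condition in the equality case; a further lemma excludes the pattern $(k+1^R,\,k-1,\,k+1^L)$. Pairing each overfull chain with an adjacent underfull chain then yields $\abs{\A}\le kn$. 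Your observation about comparabilities between your straight chains $C_j$ is correct but carries essentially no forcing power by itself; to complete your write-up you would need to formulate and prove analogues of these neighbouring-chain lemmas, and the zigzag decomposition --- not the straight one --- is what makes the required copies of $Y_k$ and $Y_k'$ appear across adjacent chains.
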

A construction matching this bound is given by taking $k$ consecutive middle levels of $2^{[n]}$. We also have the LYM-type inequality:

\begin{theorem}
\label{YkLYM}
Let $k \ge 2$ and $n \ge k+1$.  Assume that $\A \subset 2^{[n]}$ contains neither $Y_k$ nor $Y_k'$ as a subposet, and $\varnothing,[n] \notin \A$, then
\begin{displaymath}
\sum_{A\in \A} \frac{1}{\binom{n}{\abs{A}}} \le k.
\end{displaymath}
\end{theorem}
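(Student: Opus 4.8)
The plan is to follow the cyclic permutation method of Katona announced for Theorem~\ref{skewbutterfly}, reducing the LYM inequality to a purely combinatorial statement about arcs on a single cycle, and then to establish that statement through the chain decomposition of the cycle. Throughout, call a set an \emph{arc} of a cyclic permutation $\sigma$ of $[n]$ if its elements are consecutive along $\sigma$; since $\varnothing,[n]\notin\A$, every relevant arc is proper and nonempty, i.e.\ has length between $1$ and $n-1$, and there are exactly $n$ arcs of each length.

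First I would set up the averaging. For a cyclic permutation $\sigma$, let $\C_\sigma$ denote the sub-family of $\A$ consisting of those sets that happen to be arcs of $\sigma$. A set $A$ with $\abs{A}=a$ is an arc of exactly $n\cdot a!\,(n-a)!$ of the $n!$ cyclic permutations, so averaging over all $\sigma$ gives
\[
\frac{1}{n!}\sum_\sigma \abs{\C_\sigma}\;=\;\sum_{A\in\A}\frac{n\cdot \abs{A}!\,(n-\abs{A})!}{n!}\;=\;n\sum_{A\in\A}\frac{1}{\binom{n}{\abs{A}}}.
\]
Since $\A$ contains neither $Y_k$ nor $Y_k'$ and subposet-freeness is inherited by sub-families, no $\C_\sigma$ does either. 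Hence it suffices to prove the per-cycle bound: \emph{any family of proper nonempty arcs of a fixed cycle that contains neither $Y_k$ nor $Y_k'$ as a subposet has at most $kn$ members.} Substituting $\abs{\C_\sigma}\le kn$ into the display yields $\sum_{A\in\A}1/\binom{n}{\abs{A}}\le k$.

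For the per-cycle bound I would decompose the proper arcs into $n$ chains $\C_1,\dots,\C_n$, where $\C_s$ consists of the arcs that begin at position $s$ and run clockwise; each arc lies in exactly one $\C_s$, and the arcs of $\C_s$ are totally ordered by inclusion. Writing $c_s$ for the number of arcs of our family in $\C_s$, the total count is $\sum_s c_s$. The first observation is that a chain of $k+2$ nested sets already contains both $Y_k$ and $Y_k'$, so $c_s\le k+1$ for every $s$; when $n=k+1$ the longest arc has length $k$, forcing $c_s\le k$ and the bound immediately. The substance is the improvement from $(k+1)n$ to $kn$, for which I would exploit the interaction of consecutive chains. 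The key local consequences of forbidding $Y_k,Y_k'$ are these: if $c_s=k+1$ with arcs $A_1\subsetneq\cdots\subsetneq A_{k+1}$ at $s$, then applying $Y_k$ to the bottom $k$-chain $A_1\subsetneq\cdots\subsetneq A_k$ forces $A_{k+1}$ to be the \emph{only} proper superset of $A_k$ in the whole family, while applying $Y_k'$ to the top $k$-chain $A_2\subsetneq\cdots\subsetneq A_{k+1}$ forces $A_1$ to be the \emph{only} proper subset of $A_2$. Translated into the neighbouring chains, a ``full'' chain ($c_s=k+1$) forbids all sufficiently short arcs in $\C_{s+1}$ and all sufficiently long arcs in $\C_{s-1}$; in particular, along a run of consecutive full chains the length of the shortest arc increases strictly.

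The main obstacle is converting these local constraints into the exact global saving of $n$. My plan is to track a monotone quantity, such as the shortest arc-length of each chain, around the cycle: consecutive full chains force it to increase strictly, yet all lengths are bounded by $n-1$, so a block of full chains cannot wrap all the way around and must be interrupted by deficient chains (with $c_s\le k-1$) whose slack offsets the excess of the full chains. Making this compensation exact, i.e.\ proving $\sum_s(c_s-k)\le 0$ directly, or equivalently exhibiting an injection of the family into a set of $kn$ arcs, is the crux of the argument; the delicacy is that a merely ``interrupting'' chain with $c_s=k$ pays nothing, so the monotonicity must be pushed far enough to guarantee genuine deficiency. Once this is in place the averaging step completes the proof, and the specialisation $k=2$ recovers the inequality implicit in the work of De Bonis, Katona and Swanepoel.
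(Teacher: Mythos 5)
Your reduction is sound as far as it goes: the averaging over cyclic permutations is exactly the paper's double count, and it correctly reduces Theorem \ref{YkLYM} to the per-cycle bound ``any $Y_k,Y_k'$-free family of proper arcs has at most $kn$ members,'' which is precisely Lemma \ref{lemma:mainlemma2} of the paper. The gap is that you never prove this bound; your local lemmas are true but, for the decomposition you chose, provably too weak, and the global accounting you describe as ``the crux'' is left as a plan rather than an argument.

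Concretely, you decompose the arcs into chains $\C_s$ consisting of all arcs with left endpoint $x_s$ growing clockwise; this is the decomposition the paper uses only in Section 4, not the zigzag decomposition of Section 2, in which each chain grows alternately to the left and right of $x_i$. The difference is essential. In the zigzag decomposition consecutive chains interleave, and that interleaving is what powers Lemma \ref{lemmaX} and Corollary \ref{corY}: a chain meeting $\A$ in $k+1$ sets of type $k+1^R$ forces $\abs{\A \cap \C_{i+1}} \le k-1$, i.e.\ genuine deficiency in the adjacent chain, after which full chains can be grouped with their deficient neighbours (plus one extra lemma resolving the $R$/$L$ conflict) to get $kn$. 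In your decomposition no deficiency is forced locally: containments between $\C_s$ and $\C_{s+1}$ go only one way, since no arc starting at $x_{s+1}$ contains $x_s$, and two consecutive chains can both be full. For instance, with $k=2$ the six arcs $\{x_1\},\{x_1,x_2\},\{x_1,x_2,x_3\}$ and $\{x_2,x_3\},\{x_2,x_3,x_4\},\{x_2,x_3,x_4,x_5\}$ contain no $Y_2$ and no $Y_2'$, yet give $c_1=c_2=3=k+1$. So a full chain costs nothing nearby, and your monotone quantity cannot close the count: going around the cycle the total increase of the minimum arc length equals the total decrease, but decreases may occur at chains with exactly $k$ arcs, which carry no deficiency, so the number of full chains is not bounded by the total deficiency via monotonicity alone. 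This is exactly the delicacy you acknowledge, and it is not a routine completion; with your decomposition it would require a genuinely new charging argument, whereas switching to the paper's zigzag decomposition makes the stronger local lemmas, and hence the grouping proof of $\abs{\A}\le kn$, go through.
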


We note that, again, the double chain method does not work for these pairs because one can have $2k+1$ sets on a double chain with no $Y_k$ and no $Y_k'$ by taking them consecutively on the secondary chain.  We also note that, for this particular result, we can find another proof using the chain partitioning method of Griggs, Li and Lu \cite{griggs2012diamond} in addition to the approach described in this paper.  

Finally, we consider the more difficult induced case.  We prove

\begin{theorem}
\label{induced}
For $n \ge 3$, we have
\begin{displaymath}
\La^{\#}(n,Y,Y') = \Sigma(2,n). 
\end{displaymath}
\end{theorem}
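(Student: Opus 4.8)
The plan is to prove Theorem \ref{induced}, the induced version of the $Y,Y'$-free result. Since any family that avoids $Y$ and $Y'$ as \emph{subposets} certainly avoids them as induced subposets, the lower bound $\La^{\#}(n,Y,Y') \ge \La(n,Y,Y') = \Sigma(n,2)$ is immediate from Theorem \ref{Yk} with $k=2$ (the two middle levels provide the matching construction, and avoiding a poset as a weak subposet is stronger than avoiding it as an induced one). The entire difficulty lies in the upper bound, where we must rule out copies of $Y$ and $Y'$ only when they appear as \emph{induced} subposets --- a weaker hypothesis on the family, hence a harder theorem.

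First I would set up the cyclic permutation / interval framework from Section $2$, decomposing the intervals along a random cyclic permutation into chains exactly as in the proofs of Theorems \ref{skewbutterfly} and \ref{YkLYM}. The goal is to show that within the intervals of a single cyclic order, an induced-$\{Y,Y'\}$-free family can occupy at most two of the chains in the decomposition, which after averaging over all cyclic permutations yields the bound $\Sigma(n,2)$. The key structural observation I would exploit is this: the only difference between forbidding $Y$ as a weak subposet and forbidding it as an induced subposet is that, in the induced case, a candidate copy $w \le x \le y,z$ is permitted when the ``top'' pair $\{y,z\}$ is in fact comparable (so the configuration is really a chain of length $4$ plus a relation, not an induced $Y$). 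So I would isolate precisely these ``degenerate'' configurations and argue that they cannot be used to pack more sets than the weak-subposet bound allows.

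The main technical step is a careful local analysis of three consecutive chains $C_{i-1}, C_i, C_{i+1}$ in the chain decomposition. I would suppose for contradiction that the family meets at least three chains and extract, from the comparabilities forced between sets on different chains, either a genuine induced $Y$ or a genuine induced $Y'$. The subtlety is that comparabilities which would immediately yield a weak $Y$ might instead force additional comparabilities (collapsing $\{y,z\}$ into a chain), and one must then reroute the argument to produce a $Y'$ instead, or push the copy to a different triple of elements where the top (respectively bottom) pair is genuinely incomparable. This is exactly where the symmetry between $Y$ and $Y'$ under order-reversal becomes essential: whenever the induced hypothesis blocks a $Y$ by supplying an extra relation at the top, that same relation tends to create the incomparability needed for a $Y'$ at the bottom, and vice versa.

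The hardest part, I expect, will be controlling these degenerate cases where forbidden-poset copies get ``repaired'' into chains by unwanted comparabilities. In the weak-subposet proof (Theorem \ref{YkLYM}), any three sets with the right comparability pattern immediately constitute a forbidden copy; in the induced setting one must additionally verify that the two incomparable elements really are incomparable, and track what happens when they are not. I anticipate needing a case distinction on how many of the relevant pairs across consecutive chains are comparable, and in the worst case an explicit combinatorial argument showing that even after all repairs, one cannot fit a third chain's worth of sets without creating an honest induced $Y$ or $Y'$ somewhere in the configuration. Once this local three-chain obstruction is established, the global averaging over cyclic permutations is identical to the earlier proofs and presents no new difficulty.
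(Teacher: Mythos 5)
Your reduction of the lower bound to Theorem \ref{Yk} is fine, but the upper-bound plan has a genuine gap, and it sits exactly where the paper had to invent something new. You propose to reuse the first (zigzag) chain decomposition of Section \ref{S} and to derive a contradiction from the assumption that the family ``meets at least three chains.'' That intermediate statement is false: the extremal family, the two middle levels restricted to intervals along $\sigma$, meets \emph{every} chain of the decomposition (in exactly two sets each), so no local analysis of three consecutive chains $\C_{i-1},\C_i,\C_{i+1}$ can produce a contradiction from mere occupancy of three chains; what must be proved is the averaged bound $\abs{\A}\le 2n$. More seriously, the entire type-counting machinery of Section \ref{S} rests on the fact that a weak copy of the forbidden poset lives inside any sufficiently long chain, which gives the a priori bound $\abs{\A\cap\C}\le 4$ (or $k+1$) for every chain $\C$. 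In the induced setting this fails completely: a chain contains \emph{no} induced $Y$ or $Y'$ at all, since $Y$ requires two incomparable elements, so $\A$ may intersect a single chain of the decomposition in arbitrarily many (up to roughly $n/2$) sets. Such a heavy chain must be compensated by many nearly empty chains elsewhere, and in the zigzag decomposition this compensation is not local --- the excess of a chain carrying $n/2$ sets cannot be absorbed by its two neighbours --- so the ``local three-chain obstruction'' you anticipate cannot exist. Your degenerate-case analysis (rerouting blocked copies of $Y$ into copies of $Y'$) addresses a real phenomenon, but not this structural problem.

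The paper's solution is a different device, which your plan is missing: a \emph{pair} of orthogonal chain decompositions, $\C_i=\{\{x_i\},\{x_i,x_{i+1}\},\dots\}$ (always adjoining the next element clockwise) and $\C_i'=\{\{x_i\},\{x_i,x_{i-1}\},\dots\}$ (always counterclockwise), so that every interval lies in exactly one chain of each partition, and sets lying above (or below) a common element on the two chains through it are automatically incomparable. The key lemma (Lemma \ref{ortho}) then says: if a chain $\C$ of one partition carries $k>2$ sets $A_1\subset\dots\subset A_k$ of $\A$, then for each middle set $A_j$ the chain of the \emph{other} partition through $A_j$ contains no further set of $\A$, since any such set would be incomparable to $A_{j+1}$ or to $A_{j-1}$, producing an honest induced $Y$ or $Y'$. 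Counting incidences between $\A$ and the $2n$ chains of the two partitions gives $2\abs{\A}\le 4n$, hence $\abs{\A}\le 2n$, and the LYM inequality follows by the usual averaging over cyclic permutations. Note also that the final step is not routine: deducing $\La^{\#}(n,Y,Y')=\Sigma(n,2)$ from Theorem \ref{inducedLYM} requires separate treatment of families containing $\varnothing$ or $[n]$, including an equality-case analysis and an appeal to the Katona--Tarj\'an bound, which your proposal dismisses as presenting no new difficulty.
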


We also have the LYM-type inequality:

\begin{theorem}
\label{inducedLYM}
Assume that $\A \subset 2^{[n]}$ contains neither $Y$ nor $Y'$ as an induced subposet, and $\varnothing,[n] \notin \A$, then
\begin{displaymath}
\sum_{A\in \A} \frac{1}{\binom{n}{\abs{A}}} \le 2.
\end{displaymath}
\end{theorem}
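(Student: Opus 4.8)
The plan is to follow the cyclic permutation method used throughout the paper and reduce the inequality to a purely combinatorial bound on a single cycle. Given a cyclic order $\pi$ of $[n]$, call a set an \emph{interval} of $\pi$ if its elements are consecutive along $\pi$, and let $f(\pi)$ denote the number of members of $\A$ that are intervals of $\pi$. Counting over all $n!$ cyclic orders, a fixed set $A$ with $\abs{A}=j$ is an interval of exactly $n\,j!\,(n-j)!$ of them, and $\frac{n\,j!\,(n-j)!}{n!}=\frac{n}{\binom{n}{j}}$. Summing over $A\in\A$ therefore gives
\[
\sum_{A\in \A} \frac{1}{\binom{n}{\abs{A}}}=\frac{1}{n\cdot n!}\sum_{\pi} f(\pi),
\]
the outer sum ranging over all cyclic orders. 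Hence it suffices to prove that $f(\pi)\le 2n$ for every fixed cyclic order; since $\varnothing,[n]\notin\A$, we only ever deal with proper nonempty intervals, of which there are $n(n-1)$ in all.

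Fix a cyclic order and let $\F$ be the family of intervals lying in $\A$, regarded as a subposet of the interval poset. The next step is to translate induced $Y$- and $Y'$-freeness into structural statements about $\F$. An induced copy of $Y$ is a two-element chain $W\subsetneq X$ together with two \emph{incomparable} intervals strictly above $X$; thus forbidding induced $Y$ is exactly the assertion that every non-minimal member $X$ of $\F$ has all members of $\F$ strictly above it lying on a single chain. Dually, forbidding induced $Y'$ says that every non-maximal member of $\F$ has all members strictly below it on a single chain. The crucial asymmetry with the weak case is that branching is still permitted, but only upward at minimal intervals and downward at maximal intervals. As a warm-up I would record that the minimal intervals of $\F$ form an antichain, and since two intervals with a common left endpoint are nested, these minimal intervals inject into the $n$ possible left endpoints of the cycle; in particular there are at most $n$ of them.

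To finish I would decompose the $n(n-1)$ proper intervals into the $n$ chains $C_a$ of all intervals with a fixed left endpoint $a$, so that each $\F\cap C_a$ is a chain, and analyze the interaction of consecutive chains $C_a$ and $C_{a+1}$ — whose members differ only in the single point $a$ — exactly as in the proof of Theorem~\ref{skewbutterfly}. The target is $\sum_a\abs{\F\cap C_a}\le 2n$, an average of two intervals per chain, matching the two-consecutive-levels construction. The main obstacle is precisely the feature separating the induced problem from the weak one: since a non-minimal interval may sit beneath an arbitrarily long chain, $\F$ can contain long chains and can branch both upward at its minimal intervals and downward at its maximal intervals, so an individual chain $C_a$ may contain many intervals and no naive per-chain or consecutive-pair estimate can hold. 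The heart of the argument must therefore be a global charging scheme — ideally assigning each interval of $\F$ to one of the $n$ gaps of the cycle so that no gap is charged more than twice — driven by the observation that upward and downward branching cannot coexist along overlapping intervals: the chain forced above a non-minimal interval by induced-$Y$-freeness, combined with a second minimal interval branching below it or a downward branch beneath a maximal interval meeting it, would reconstitute an induced $Y$ or $Y'$. Making this trade-off quantitative is the step I expect to require the most care, and it is what pins the per-cycle count at $2n$ and hence yields Theorem~\ref{inducedLYM}.
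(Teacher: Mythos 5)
Your reduction to a per-cycle bound is correct and matches the paper: the double count over cyclic orders is right, and the target inequality $f(\pi)\le 2n$ is exactly the paper's key lemma (Lemma \ref{ortho}). Your structural translation of induced $Y$/$Y'$-freeness (everything strictly above a non-minimal member of $\F$ forms a chain, dually below non-maximal members) is also correct, and you are right that the single left-endpoint decomposition from Section \ref{S} cannot work here, since $\F$ may place arbitrarily many sets on one chain $C_a$. But at precisely that point the proposal stops: you state that ``the heart of the argument must therefore be a global charging scheme'' and that making the branching trade-off quantitative ``is the step I expect to require the most care.'' That step is the entire content of the lemma, and it is missing. The observation you offer to drive the charging --- that upward and downward branching cannot coexist along overlapping intervals --- is not quantitative and does not by itself bound $f(\pi)$; no assignment of intervals to gaps is constructed, so the bound $f(\pi)\le 2n$ is never established.

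The idea you are missing is the paper's second, \emph{orthogonal} chain decomposition. Partition the intervals along $\pi$ in two ways: into the chains $\C_i$ of intervals growing rightward from $x_i$ (your $C_a$), and into the chains $\C_i'$ of intervals growing leftward from $x_i$. Every interval lies in exactly one chain of each partition, so summing $\abs{\A\cap\C}$ over all $2n$ chains gives exactly $2\abs{\A}$. Now if some $\C\in\{\C_i\}$ meets $\A$ in $k>2$ sets $A_1\subset\cdots\subset A_k$, consider a middle set $A_j$ ($2\le j\le k-1$) and the chain $\C'$ of the other partition through $A_j$: any further set of $\A$ on $\C'$ above $A_j$ is incomparable to $A_{j+1}$ (one extends rightward, the other leftward from $A_j$), which together with $A_{j-1}\subset A_j$ creates an induced $Y$; dually below one gets an induced $Y'$. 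So each of the $k-2$ middle sets forces a chain of the other partition meeting $\A$ in exactly one set, and these $k-1$ chains carry only $2k-2$ incidences between them. Globally this caps the incidence count at twice the number of chains, i.e.\ $2\abs{\A}\le 4n$, which is the bound you needed. Without this second decomposition (or some genuinely equivalent mechanism, fully executed), the proposal is a correct setup plus an accurate description of the difficulty, not a proof.
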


To prove Theorem \ref{inducedLYM}, we introduce a second chain partitioning argument along the cycle.  These partitions may be thought of as the analogue of orthogonal symmetric chain partitions \cite{shearer1979probabilities} for the cycle. The method and the proof of Theorem  \ref{induced} are given in Section $3$.  Observe that because $B$ is a subposet of both $Y$ and $Y'$, and the inequality $\La(n,\p) \le \La^{\#}(n,\p)$ always holds, we again have a direct generalization of the result of De Bonis, Katona and Swanepoel.  Note that the problem of determining $\La^{\#}(n,B)$ is fundamentally different.  In this case, one can construct a larger family by adjoining distance $4$ constant weight codes \cite{graham1980lower} to two consecutive full levels.  Here an exact result is difficult to determine because the corresponding coding theory question is a long standing open problem.   Some asymptotic results for this problem may be found in \cite{patkos2014induced}.

The paper is organized as follows.  In the second section we introduce the first chain decomposition and determine $\La(n,S)$.  In the third section we use the same decomposition to find $\La(n,Y_k,Y_k')$ for all $k \ge 2$.  In the last section we introduce the second decomposition and show that $\La^\#(n,Y,Y') = \Sigma(n,2)$.

\section{Forbidding $S$ and the first cycle decomposition}
\label{S}
A cyclic permutation, $\sigma$, is a cyclic ordering $x_1,x_2,\dots, x_n,x_1$ of the elements of $[n]$.   We refer to the sets $\{x_i,x_{i+1},\dots,x_{i+t}\}$, with addition taken modulo $n$, as intervals along the cyclic permutation.  For our purpose we will not consider $\varnothing$ or $[n]$ to be intervals.    The following lemma is the essential ingredient of the proof of Theorem \ref{skewbutterfly}:

\begin{lemma}
\label{lemma:mainlemma}
If $\A$ is  a collection of intervals along a cyclic permutation $\sigma$ of $[n]$ which does not contain $S$ as a subposet, then 
\begin{displaymath}
\abs{\A} \le 2n.
\end{displaymath} 
\end{lemma}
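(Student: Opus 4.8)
The plan is to prove the lemma by the cyclic chain decomposition advertised in the introduction, reducing the bound $\abs{\A}\le 2n$ to an averaged ``two per chain'' estimate. First I would partition all $n(n-1)$ intervals of $\sigma$ into $n$ chains $C_1,\dots,C_n$, where $C_k$ collects every interval whose first element (in the cyclic order) is $x_k$; explicitly $C_k=\{\{x_k\},\{x_k,x_{k+1}\},\dots,\{x_k,\dots,x_{k+n-2}\}\}$. Each $C_k$ is a chain of $n-1$ intervals under inclusion, and since an interval is determined by its first element together with its length, the $C_k$ partition the entire interval family. Writing $a_k=\abs{\A\cap C_k}$, the lemma becomes the statement $\sum_{k=1}^n a_k\le 2n$, i.e. the chains contribute two intervals each on average.

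The second step is to record how distinct chains interact. Because every member of $C_k$ contains $x_k$ while no interval of length at most $n-1$ beginning at $x_{k+1}$ does, inclusions between $C_k$ and a later chain are one-directional: an interval beginning at $x_{k+1}$ may be nested inside one beginning at $x_k$, but never conversely. Moreover a copy of $S$ can never lie inside a single $C_k$, since a chain is totally ordered whereas $S$ is not, so every copy of $S$ in $\A$ must spread its five intervals across at least two chains. I would then translate $S$-freeness into the following local obstruction: whenever $\A$ contains a three-element chain $B\subsetneq E\subsetneq D$, there is no pair $a\subseteq c$ in $\A$, disjoint from $\{B,E,D\}$, with $a\subseteq D$ and $B\subseteq c$, because such a pair completes the skew-butterfly with middle element $E$ (taking $b=B$, $e=E$, $d=D$).

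The heart of the argument, and the step I expect to be the main obstacle, is to convert this local obstruction into the global count $\sum_k a_k\le 2n$ by examining consecutive chains. If every $a_k\le 2$ the bound is immediate, so the work lies in the thick chains, those $C_k$ contributing a three-chain $B\subsetneq E\subsetneq D$. The plan is a discharging scheme in which each interval of $\A$ places a unit charge on a position of $[n]$, and one shows---using the one-directional nesting between consecutive chains together with the forbidden completion of $S$---that any short interval a neighboring chain could contribute, nested in $D$ and paired with an interval through $B$, would build a copy of $S$; hence the surplus of a thick chain is paid for by deficits forced on its neighbors, and no position is charged more than two. Making the charging rule precise and checking it across all configurations (thick chains adjacent to thick or to thin chains, long isolated chains, and so on) is the delicate part of the proof.

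Finally I would note why $2n$ is the right target and is best possible. It reflects two full ``levels'': any $S$-free, indeed any butterfly-free, family is morally two antichains, and the chain partition bounds each antichain by $n$. Concretely, taking all intervals of two consecutive lengths gives $a_k=2$ for every $k$ and contains no butterfly, hence no $S$, so $\abs{\A}=2n$ and the bound is tight.
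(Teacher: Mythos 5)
Your plan has a preliminary error and a fatal structural gap. The preliminary error: your claim that ``a copy of $S$ can never lie inside a single $C_k$, since a chain is totally ordered whereas $S$ is not'' is false. The lemma is about weak subposets, where an embedding must preserve relations but need not preserve incomparabilities; consequently any five sets $A_1\subset A_2\subset A_3\subset A_4\subset A_5$ contain $S$ (take $a=A_1$, $b=A_2$, $e=A_3$, $c=A_4$, $d=A_5$). What is true, and what your argument actually needs, is that each chain carries at most \emph{four} sets of $\A$ --- a bound you never establish, and which your (false) claim would contradict.

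The fatal gap is the discharging step, which you acknowledge you have not carried out; the difficulty is that it \emph{cannot} be carried out with your decomposition, because with left-endpoint chains a thick chain does not force deficits on its neighbors. Concretely, let $A_1,\dots,A_4$ be the intervals of lengths $1,2,3,4$ starting at $x_1$ and $F_1,\dots,F_4$ the intervals of lengths $3,4,5,6$ starting at $x_2$ (with $n$ large). The only comparabilities among these eight sets are the two $4$-chains together with $F_1\subset A_4$, and one checks directly that there is no copy of $S$: the image of $d$ needs three sets strictly below it, so it must be $A_4$ or $F_4$, and in either case the only admissible image of $c$ leaves no valid image for $e$. Moreover the pattern propagates: taking the intervals of lengths $2j-1,2j,2j+1,2j+2$ starting at $x_j$, for $j=1,\dots,m$ with $m=\Theta(n)$, yields $m$ \emph{consecutive} chains each holding four sets of an $S$-free family. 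So thick chains can sit next to thick chains in long runs; the compensating deficits appear only far away on the cycle, where the forced growth of interval lengths (up by two per chain, but capped at $n-1$) finally blocks everything. Any proof along your lines must therefore be global, not a local charge transfer between neighbors. This is precisely why the paper does not use the left-endpoint decomposition here, but instead the ``zigzag'' chains $\C_i = \{\{x_i\},\{x_i,x_{i-1}\},\{x_i,x_{i-1},x_{i+1}\},\dots\}$ of length roughly $n/2$: consecutive chains of that decomposition interlock (their union's Hasse diagram is a ladder), so a chain with four sets, or three sets in suitable position, forces the next chain to contain at most one set of $\A$ (Lemmas \ref{lemma:technical1} and \ref{lemma:technical2}), and only then does the local grouping you envision go through.
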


To prove Lemma \ref{lemma:mainlemma} we will work with a decomposition of the intervals along $\sigma$ into maximal chains.   Set $\C_i = \{ \{x_i\}, \{x_i,x_{i-1}\}, \{x_i,x_{i-1},x_{i+1}\},\dots, \{x_i,x_{i-1},\dots,x_{i+n/2-1}\} \}$ when $n$ is even, and set $\C_i = \{\{x_i\}, \{x_i,x_{i-1}\}, \{x_i,x_{i-1},x_{i+1}\},\dots, \{x_i,x_{i-1},\dots,x_{i-(n-1)/2}\}\}$ when $n$ is odd, where $1 \le i \le n$ (See Figure \ref{figure 1}).  Observe that the set of chains $\{\C_i\}_{i=1}^n$ forms a partition of the intervals along $\sigma$.  We will refer to this partition as the \emph {chain decomposition} of $\sigma$. Additionally, chains corresponding to consecutive elements of $\sigma$ are called consecutive chains.  

\begin{figure}[h]
\begin{center}
\begin{tikzpicture}[line cap=round,line join=round,>=triangle 45, x=1.0cm,y=1.0cm, scale = 0.75]
\draw (2,3)-- (3,2);
\draw [line width=3.6pt] (4,3)-- (3,2);
\draw (4,3)-- (5,2);
\draw [line width=3.6pt] (5,2)-- (6,3);
\draw (6,3)-- (7,2);
\draw [line width=3.6pt] (7,2)-- (8,3);
\draw (8,3)-- (9,2);
\draw (2,3)-- (3,4);
\draw [line width=3.6pt] (4,3)-- (3,4);
\draw (5,4)-- (4,3);
\draw [line width=3.6pt] (5,4)-- (6,3);
\draw (6,3)-- (7,4);
\draw [line width=3.6pt] (7,4)-- (8,3);
\draw (8,3)-- (9,4);
\draw [line width=3.6pt] (9,2)-- (10,3);
\draw [line width=3.6pt] (3,4)-- (4,5);
\draw (5,4)-- (4,5);
\draw [line width=3.6pt] (5,4)-- (6,5);
\draw [line width=3.6pt] (7,4)-- (8,5);
\draw (9,4)-- (8,5);
\draw (10,3)-- (11,4);
\draw [line width=3.6pt] (9,4)-- (10,5);
\draw (10,5)-- (11,4);
\draw [line width=3.6pt] (9,4)-- (10,3);
\draw [line width=3.6pt] (11,4)-- (12,5);
\draw (1,2)-- (0,1);
\draw [line width=3.6pt] (1,2)-- (2,1);
\draw (3,2)-- (2,1);
\draw [line width=3.6pt] (3,2)-- (4,1);
\draw (5,2)-- (4,1);
\draw (7,2)-- (6,1);
\draw [line width=3.6pt] (7,2)-- (8,1);
\draw (9,2)-- (8,1);
\draw [line width=3.6pt] (9,2)-- (10,1);
\draw (11,2)-- (10,1);
\draw [line width=3.6pt] (11,2)-- (12,1);
\draw (13,2)-- (12,1);
\draw [line width=3.6pt] (13,2)-- (14,1);
\draw (10,3)-- (11,2);
\draw [line width=3.6pt] (11,4)-- (12,3);
\draw (13,4)-- (12,3);
\draw [line width=3.6pt] (12,3)-- (11,2);
\draw (13,2)-- (12,3);
\draw [line width=3.6pt] (13,4)-- (14,3);
\draw [line width=3.6pt] (14,3)-- (13,2);
\draw (14,1)-- (15,2);
\draw (15,2)-- (14,3);
\draw (12,5)-- (13,4);
\draw [line width=3.6pt] (15,2)-- (16,3);
\draw (16,3)-- (17,4);
\draw (14,3)-- (15,4);
\draw [line width=3.6pt] (15,4)-- (16,5);
\draw (16,5)-- (17,4);
\draw [line width=3.6pt] (15,4)-- (16,3);
\draw (15,4)-- (14,5);
\draw [line width=3.6pt] (14,5)-- (13,4);
\draw (14,5)-- (15,6);
\draw [line width=3.6pt] (15,6)-- (16,5);
\draw [line width=3.6pt] (10,5)-- (9,6);
\draw [line width=3.6pt] (9,6)-- (10,7);
\draw [line width=3.6pt] (12,5)-- (11,6);
\draw [line width=3.6pt] (12,7)-- (11,6);
\draw (6,5)-- (7,6);
\draw [line width=3.6pt] (7,6)-- (8,7);
\draw (8,5)-- (9,6);
\draw [line width=3.6pt] (7,6)-- (8,5);
\draw (4,5)-- (5,6);
\draw [line width=3.6pt] (5,6)-- (6,7);
\draw [line width=3.6pt] (5,6)-- (6,5);
\draw (7,6)-- (6,7);
\draw (9,6)-- (8,7);
\draw (11,6)-- (10,7);
\draw (11,6)-- (10,5);
\draw (12,7)-- (13,6);
\draw (13,6)-- (12,5);
\draw [line width=3.6pt] (13,6)-- (14,5);
\draw (15,6)-- (14,7);
\draw [line width=3.6pt] (13,6)-- (14,7);
\draw [line width=3.6pt] (5,2)-- (6,1);
\draw [line width=3.6pt] (2,3)-- (1,2);
\draw [line width=3.6pt] (15,6)-- (16,7);
\draw (16,7)-- (17,6);
\draw (17,6)-- (16,5);
\draw [line width=3.6pt] (17,4)-- (18,5);
\draw (18,5)-- (19,6);
\draw [line width=3.6pt] (17,6)-- (18,5);
\draw (19,6)-- (18,7);
\draw [line width=3.6pt] (18,7)-- (17,6);
\draw [line width=3.6pt] (19,6)-- (20,7);
\draw [line width=2.4pt,dash pattern=on 3pt off 3pt] (4.1,5.1)-- (19,6);
\draw [line width=2.4pt,dash pattern=on 3pt off 3pt] (17,4)-- (2,3);
\draw [line width=2.4pt,dash pattern=on 3pt off 3pt] (15,2)-- (0,1);
\draw (12,8)-- (6,7);
\draw (12,8)-- (8,7);
\draw (12,8)-- (10,7);
\draw (12,8)-- (12,7);
\draw (12,8)-- (14,7);
\draw (12,8)-- (16,7);
\draw (12,8)-- (18,7);
\draw (12,8)-- (19.98,6.98);
\draw (7,0)-- (8,1);
\draw (7,0)-- (2,1);
\draw (7,0)-- (0,1);
\draw (7,0)-- (10,1);
\draw (7,0)-- (12,1);
\draw (7,0)-- (14,1);
\draw (7,0)-- (4,1);
\draw (7,0)-- (6,1);
\end{tikzpicture}
\end{center}
\caption{The chain decomposition is marked with bold lines on the poset of intervals along $\sigma$. The dashed lines indicate how the chains wrap around.}
\label{figure 1}
\end{figure}
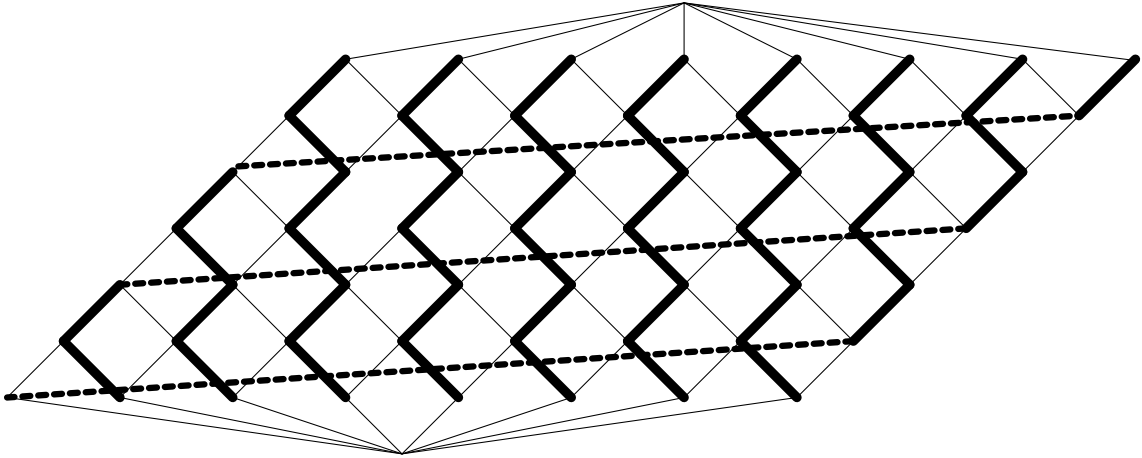

If $\A$ does not contain $S$ as a subposet, and $\C$ is a chain from the chain decomposition of $\sigma$, then it is easy to see that $\abs{\A\cap \C} \le 4$.   We will classify the chains in the chain decomposition by their intersection pattern with $\A$.   If $\abs{\A \cap \C}=k$, then we say $\C$ is of type $k$.    When $k=3$ we distinguish $3$ cases (See Figure \ref{figure 2} for an example of each case). If $\C$ contains exactly $3$ elements of $\A$, not all occurring consecutively on $\C$, then we say $\C$ is type $3^S$ ($S$ for separated).  If $\C$ has exactly 3 elements of $\A$ occurring consecutively with two sets of odd size, then $\C$ is type $3^R$ (facing right).  If $\C$ has exactly 3 elements of $\A$ occurring consecutively with two sets of even size, then $\C$ is type $3^L$ (facing left).

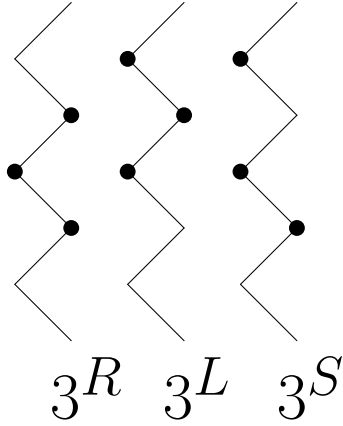
\begin{figure}[h]
\begin{center}
\begin{tikzpicture}[line cap=round,line join=round,>=triangle 45,x=1.0cm,y=1.0cm, scale = 0.75]
\draw (2,1)-- (1,2);
\draw (2,3)-- (1,2);
\draw (2,3)-- (1,4);
\draw (1,4)-- (2,5);
\draw (2,5)-- (1,6);
\draw (1.43,0.90
) node[anchor=north west] {$\mathbf{\mbox{\huge $3^R$}}$};
\draw (1,6)-- (2,7);
\draw (4,7)-- (3,6);
\draw (4,5)-- (3,4);
\draw (3,6)-- (4,5);
\draw (3,4)-- (4,3);
\draw (4,3)-- (3,2);
\draw (3,2)-- (4,1);
\draw (6,7)-- (5,6);
\draw (5,6)-- (6,5);
\draw (5,4)-- (6,5);
\draw (5,4)-- (6,3);
\draw (6,3)-- (5,2);
\draw (5,2)-- (6,1);
\draw (3.43,0.90) node[anchor=north west] {$\mathbf{\mbox{\huge $3^L$}}$};
\draw (5.45,0.90) node[anchor=north west] {$\mathbf{\mbox{\huge $3^S$}}$};
\begin{scriptsize}
\fill [color=black] (2,3) circle (4pt);
\fill [color=black] (1,4) circle (4pt);
\fill [color=black] (2,5) circle (4pt);
\fill [color=black] (3,6) circle (4pt);
\fill [color=black] (4,5) circle (4pt);
\fill [color=black] (3,4) circle (4pt);
\fill [color=black] (5,6) circle (4pt);
\fill [color=black] (5,4) circle (4pt);
\fill [color=black] (6,3) circle (4pt);
\end{scriptsize}
\end{tikzpicture}
\end{center}
\caption{An example of chains of types $3^R$, $3^L$ and $3^S$ are drawn. The elements of $\A \cap \C$ are highlighted for each type.}
\label{figure 2}
\end{figure}

We will now prove a sequence of lemmas showing which types of chains can occur consecutively in the chain decomposition of $\sigma$.  These lemmas will let us disregard the exact intersection pattern of $\A$ with the chains and allow us to work instead with the sequence of chain types.

\begin{lemma}
\label{lemma:technical1}
Let $\C_i$ and $\C_{i+1}$ be two consecutive chains in the chain decomposition of a cyclic permutation.  If $\C_i$ is of type $4, 3^R$ or $3^S$, then $\abs{\A \cap \C_{i+1}} \le 1$.
\end{lemma}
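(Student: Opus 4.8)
The plan is to introduce explicit coordinates for the intervals making up $\C_i$ and $\C_{i+1}$ and to read off every containment between the two chains, and then, assuming $\abs{\A \cap \C_{i+1}} \ge 2$, to locate a copy of $S$ on five of the sets involved. Writing $C_i^{(\ell)}$ for the size-$\ell$ interval of $\C_i$, it is the arc from position $i-\floor{\ell/2}$ to position $i+\ceil{\ell/2}-1$, and similarly $C_{i+1}^{(\ell)}$ is the same arc shifted one step to the right; comparing endpoints then gives all the comparabilities between the chains. Two structural facts will do most of the work. First, every odd-sized interval of $\C_i$ is comparable to \emph{every} even-sized interval of $\C_{i+1}$: it contains the smaller ones and is contained in the larger ones. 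Second, each odd-sized interval of $\C_i$ lies strictly between two consecutive even-sized intervals of $\C_{i+1}$, i.e. $C_{i+1}^{(2t)} \subset C_i^{(2t+1)} \subset C_{i+1}^{(2t+2)}$.

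Now suppose for contradiction that $\C_{i+1}\cap\A$ contains two sets $B_1 \subsetneq B_2$. Because $\C_i$ has type $4$, $3^R$, or $3^S$, the chain $\C_i\cap\A$ has at least three members $P_1 \subsetneq P_2 \subsetneq P_3$ (with a fourth available in the type-$4$ case). I would try to realize $S$ on the five sets $P_1,P_2,P_3,B_1,B_2$, using the $\C_i$-chain as the long leg $b \le e \le d$ of $S$. Concretely, the assignment $b=P_1$, $e=P_2$, $d=P_3$, $a=B_1$, $c=B_2$ is a copy of $S$ as soon as $B_1 \subseteq P_3$ and $P_1 \subseteq B_2$, since the four relations $a\le c$, $b\le d$, $b\le e$, $e\le d$ hold automatically within the two chains. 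When these two containments fail I would switch to one of a few companion assignments — for instance taking one of $B_1,B_2$ as the middle element $e$ or as a top element $d$ — each covering a different range of sizes of $B_1,B_2$ relative to the $P_j$.

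The bulk of the work is then a case analysis, over the type of $\C_i$ and over the parities and sizes of $B_1,B_2$, verifying that in every admissible case at least one assignment succeeds. This is exactly where the type hypothesis enters. In a $3^R$ chain the two extreme sets $P_1,P_3$ have odd size, so by the first structural fact they are comparable to every even-sized interval of $\C_{i+1}$, which forces the containments needed above. In a $3^S$ chain the gap among the three sets of $\A\cap\C_i$ widens the available size window and again forces them; for example, with $\A\cap\C_i=\{C_i^{(2)},C_i^{(4)},C_i^{(6)}\}$ one checks directly that a suitable assignment such as $a=B_1$, $b=P_1$, $c=P_2$, $d=P_3$, $e=B_2$ gives a copy of $S$. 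In the type-$4$ case the fourth set gives still more room, with the fallback that any five comparable sets already contain $S$.

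The step I expect to be the main obstacle is precisely this parity bookkeeping. A containment between a $\C_i$-interval and a $\C_{i+1}$-interval can fail by a single element when the two intervals have the wrong relative parity, and these boundary configurations are exactly what distinguishes the covered types $3^R$ and $3^S$ from the uncovered type $3^L$. The crux is to confirm that for types $4$, $3^R$, and $3^S$ some $S$-assignment always avoids every such one-element obstruction; noting in parallel that a $3^L$ chain does admit a pair $B_1 \subsetneq B_2$ escaping all assignments is what shows the parity hypothesis is essential and that the case split cannot be bypassed by a softer argument.
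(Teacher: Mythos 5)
The setup in your proposal is sound: the coordinates, the two comparability facts, and the candidate embeddings are all correct, and they are in fact the same embeddings the paper uses (your primary assignment is the paper's case $\abs{A}<\abs{M}<\abs{B}$, and your ``$B_2$ as top element'' companion is its case $\abs{M}<\abs{A}<\abs{B}$). But the proposal stops exactly where the proof has to happen. You yourself name the parity bookkeeping as the crux and then leave it as a plan: ``verifying that in every admissible case at least one assignment succeeds'' is asserted, never performed. Moreover, the one general claim you do make, for type $3^R$, is false as stated: comparability of $P_1,P_3$ with every even-sized interval of $\C_{i+1}$ does \emph{not} force the containments $B_1 \subseteq P_3$ and $P_1 \subseteq B_2$, because comparability says nothing about its direction. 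If $B_1,B_2$ are even-sized with $\abs{B_1}>\abs{P_3}$ you get $P_3 \subset B_1$ instead (a $5$-chain, needing the fallback you only invoke for type $4$); if $\abs{P_1}<\abs{B_1}<\abs{P_3}<\abs{B_2}$ yet another assignment is needed; and if $B_1,B_2$ are odd-sized your first structural fact does not apply at all. Likewise the $3^S$ discussion is a single example (``such as'') whose required containments fail for other sizes of $B_1,B_2$. So the case analysis, which is the entire content of the lemma, is genuinely open in your write-up.

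The paper avoids this explosion of cases with one idea your proposal is missing: a replacement (normalization) argument on the Hasse diagram of $\C_i \cup \C_{i+1}$. Call a vertex occupied if its set lies in $\A$; after extending the chains at their ends, every occupied vertex has degree $2$ or $4$, and for types $4$ (after discarding one set), $3^R$ and $3^S$ --- but, crucially, not $3^L$ --- every occupied degree-$4$ vertex can be exchanged for a distinct adjacent unoccupied degree-$2$ vertex without introducing any new containments, so a copy of $S$ found after the exchange pulls back to one in $\A$. Once all occupied vertices have degree $2$, the only surviving incomparabilities between the two chains are between sets whose sizes differ by one, and the lemma collapses to three short cases according to whether $\abs{M}$ (the middle set of $\A \cap \C_i$) lies below, between, or above $\abs{A}$ and $\abs{B}$, each exhibiting $S$ immediately. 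To salvage your route you must either carry out your full parity/size case analysis explicitly (it is finite but considerably larger than three cases, and must track containment directions, not mere comparability), or first prove a normalization lemma of this kind.
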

\begin{proof}
First, note that if $\C_i$ is of type 4, then we can remove a set from $\A \cap \C_i$ to make it type $3^S$.  Hence, we may assume that $\C_i$ is of type $3^S$ or $3^R$. 

In order to reduce case analysis, we will now argue that we only need to consider certain configurations of sets from $\A$ in $\C_i \cup \C_{i+1}$.  Consider the Hasse diagram  of $\C_i \cup \C_{i+1}$ as a graph (See Figure \ref{figure 3}). Call the vertices corresponding to sets in $\A$ occupied and the rest unoccupied.  If either the top or bottom vertex in the chain is occupied, then we extend $\C_i \cup \C_{i+1}$ in both directions maintaining the same relations between adjacent levels. Then, every occupied vertex either has degree $2$ or degree $4$.  We will see that it is sufficient to consider the case when only degree $2$ vertices are occupied.   Indeed, if instead of taking a degree $4$ vertex, we take an adjacent unoccupied degree $2$ vertex, then no additional containments are introduced.   If $\C_i$ is of type $3^R$ or $3^S$, then every occupied vertex of degree $4$ can be replaced by a distinct adjacent unoccupied vertex of degree 2 (This cannot be done if $\C_i$ is type $3^L$).  Thus, we may assume that all of the occupied vertices in $\C_i$ from the the Hasse diagram of $\C_i \cup \C_{i+1}$ have degree $2$.

\begin{figure}[h]
\begin{center}
\begin{tikzpicture}[line cap=round,line join=round,>=triangle 45,x=1.0cm,y=1.0cm, scale = 1.35]
\draw (1,0.5)-- (0.5,1);
\draw (1,1.5)-- (0.5,1);
\draw (1,1.5)-- (1.5,1);
\draw (1.5,1)-- (1,0.5);
\draw (1.5,1)-- (2,0.5);
\draw (1.5,1)-- (2,1.5);
\draw (1,1.5)-- (0.5,2);
\draw (0.5,2)-- (1,2.5);
\draw (1,2.5)-- (1.5,2);
\draw (1.5,2)-- (1,1.5);
\draw (1.5,2)-- (2,1.5);
\draw (1.5,2)-- (2,2.5);
\draw (2,2.5)-- (1.5,3);
\draw (1.5,3)-- (1,2.5);
\draw (1,2.5)-- (0.5,3);
\draw (0.5,3)-- (1,3.5);
\draw (1,3.5)-- (1.5,3);
\draw (1.5,3)-- (2,3.5);
\draw (1,3.5)-- (0.5,4);
\draw (1.5,4)-- (1,3.5);
\draw (1.5,4)-- (2,3.5);
\draw (3.5,0.5)-- (3,1);
\draw (3.5,1.5)-- (3,1);
\draw (3.5,1.5)-- (4,1);
\draw (4,1)-- (3.5,0.5);
\draw (4,1)-- (4.5,0.5);
\draw (4,1)-- (4.5,1.5);
\draw (3.5,1.5)-- (3,2);
\draw (3,2)-- (3.5,2.5);
\draw (3.5,2.5)-- (4,2);
\draw (4,2)-- (3.5,1.5);
\draw (4,2)-- (4.5,1.5);
\draw (4,2)-- (4.5,2.5);
\draw (4.5,2.5)-- (4,3);
\draw (4,3)-- (3.5,2.5);
\draw (3.5,2.5)-- (3,3);
\draw (3,3)-- (3.5,3.5);
\draw (3.5,3.5)-- (4,3);
\draw (4,3)-- (4.5,3.5);
\draw (3.5,3.5)-- (3,4);
\draw (4,4)-- (3.5,3.5);
\draw (4,4)-- (4.5,3.5);
\draw (4.5,0.5)-- (5,1);
\draw (5,1)-- (4.5,1.5);
\draw (4.5,1.5)-- (5,2);
\draw (5,2)-- (4.5,2.5);
\draw (4.5,2.5)-- (5,3);
\draw (5,3)-- (4.5,3.5);
\draw (4.5,3.5)-- (5,4);
\draw (5,4)-- (5.5,3.5);
\draw (5.5,3.5)-- (5,3);
\draw (5,3)-- (5.5,2.5);
\draw (5.5,2.5)-- (5,2);
\draw (5,2)-- (5.5,1.5);
\draw (5.5,1.5)-- (5,1);
\draw (5,1)-- (5.5,0.5);
\begin{scriptsize}
\fill [color=black] (1.5,0) circle (1.0pt);
\fill [color=black] (1.5,0.25) circle (1.0pt);
\fill [color=black] (1.5,0.5) circle (1.0pt);
\fill [color=black] (1,4) circle (1.0pt);
\fill [color=black] (1,4.25) circle (1.0pt);
\fill [color=black] (1,4.5) circle (1.0pt);
\fill [color=black] (3.5,4) circle (1.0pt);
\fill [color=black] (3.5,4.25) circle (1.0pt);
\fill [color=black] (3.5,4.5) circle (1.0pt);
\fill [color=black] (4.5,4) circle (1.0pt);
\fill [color=black] (4.5,4.25) circle (1.0pt);
\fill [color=black] (4.5,4.5) circle (1.0pt);
\fill [color=black] (4,0) circle (1.0pt);
\fill [color=black] (4,0.25) circle (1.0pt);
\fill [color=black] (4,0.5) circle (1.0pt);
\fill [color=black] (5,0) circle (1.0pt);
\fill [color=black] (5,0.25) circle (1.0pt);
\fill [color=black] (5,0.5) circle (1.0pt);
\end{scriptsize}
\end{tikzpicture}
\end{center}
\caption{Hasse diagrams of $\C_i \cup \C_{i+1}$ and $\C_i \cup \C_{i+1} \cup \C_{i+2}$ are drawn.}
\label{figure 3}
\end{figure}

Let the sets in $\A \cap \C_i$ be $L,M$ and $N$ with $L \subset M \subset N$.  Assume, by contradiction, that there are two sets $A,B \in \A \cap \C_{i+1}$ with $A \subset B$.  We may assume that $A$ and $B$ correspond to degree $2$ vertices in $\C_i \cap \C_{i+1}$.   We will distinguish three cases by comparing the sizes of $A$ and $B$ with the size of $M$.   If $\abs{A}< \abs{M}<\abs{B}$, then  $L,M,N,A,B$ forms a skew-butterfly with $L,A \subset N,B$ and $L \subset M \subset N$. If $\abs{M}<\abs{A}<\abs{B}$, then $L,M,N,A,B$ forms a skew-butterfly with $L,M \subset N,B$ and $L \subset A \subset B$.  The case $\abs{A}<\abs{B}<\abs{M}$ is symmetric. It follows that there can be at most one set in $\A \cap \C_{i+1}$.

\end{proof}
\begin{lemma}
\label{lemma:technical2}
Let $\C_i,\C_{i+1}$ and $\C_{i+2}$ be three consecutive chains in the chain decomposition of a cyclic permutation.  If $\C_i$ is of type $4,3^R$ or $3^S$ and $\abs{\A \cap \C_{i+1}}=1$, then $\C_{i+2}$ is of type $0,1,2$ or $3^R$.
\end{lemma}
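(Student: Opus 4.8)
The plan is to argue by contradiction: assuming $\C_{i+2}$ is of type $4$, $3^S$ or $3^L$, I will exhibit a copy of $S$ in $\A$. As in the first step of Lemma~\ref{lemma:technical1}, deleting one set turns a type-$4$ chain into a type-$3^S$ chain, so I may assume $\C_i$ is of type $3^R$ or $3^S$ and $\C_{i+2}$ is of type $3^S$ or $3^L$. Write $\A\cap\C_i=\{L\subset M\subset N\}$, $\A\cap\C_{i+2}=\{L'\subset M'\subset N'\}$ and $\A\cap\C_{i+1}=\{P\}$. Reversing the cyclic order interchanges $\C_i$ with $\C_{i+2}$, fixes $\C_{i+1}$, and swaps the labels $3^R$ and $3^L$ while fixing $3^S$; since this preserves the hypotheses, I may assume that $\C_i$ is the taller outer chain, that is $\abs{N}\ge\abs{N'}$.

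The first real step is to record the containments between the three chains. Writing the size-$s$ interval of $\C_{i+t}$ as the arc $[(i+t)-\lfloor s/2\rfloor,\,(i+t)+\lfloor(s-1)/2\rfloor]$ and comparing endpoints, one sees that $\C_i$ and $\C_{i+2}$ can be comparable only when their sizes differ by at least $3$, whereas $\C_{i+1}$ is comparable to members of either outer chain across a gap as small as $1$. Thus $P$ is the only available bridge between the two outer chains, which is why the hypothesis $\abs{\A\cap\C_{i+1}}=1$ enters. The decisive computation is a parity margin: the minimum $L'$ of $\C_{i+2}$ satisfies $L'\subseteq N$ once $\abs{N}\ge\abs{L'}+3$ when $\abs{L'}$ is even, but only once $\abs{N}\ge\abs{L'}+4$ when $\abs{L'}$ is odd. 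A left-facing ($3^L$) chain makes $\abs{L'}$ even, and a separated ($3^S$) chain forces spread at least $3$ inside $\C_{i+2}$; in both cases $\abs{N}\ge\abs{N'}$ yields $L'\subseteq N$. A right-facing ($3^R$) chain makes $\abs{L'}$ odd and misses this threshold by one unit exactly at the crossing height, which is precisely why $3^R$ is excluded from the conclusion.

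The heart of the proof is a case analysis on $\abs{P}$ relative to the two outer chains, producing an explicit copy of $S$ in each regime; by the self-duality of $S$ under complementation the large-$P$ and small-$P$ regimes are mirror images. If $P$ contains both minima $L,L'$, then $L\subset M\subset N$ serves as the spine $b\le e\le d$, and adjoining $a=L'$ and $c=P$ yields $S$ (the relation $L'\subseteq N$ is the parity fact above). If instead $L'\subseteq P\subseteq N$, then $L'\subset P\subset N$ is the spine and $a=M'$, $c=N'$ complete $S$ (here $M'\subseteq N$ holds because $\C_i$ is strictly taller). If $P$ is contained in both maxima $N,N'$, then $P\subset M\subset N$ is the spine and $a=L'$, $c=N'$ complete $S$. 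In each case the four relations $a,b\le c,d$ and the three-element spine are read off directly from the endpoint formula.

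The main obstacle is the bookkeeping at the boundaries rather than any single hard idea. The degree-two replacement of Lemma~\ref{lemma:technical1} is unavailable at the top and bottom of a chain, so those extremal positions must be settled from the endpoint inequalities by hand. Moreover one residual configuration escapes the three regimes above, namely when the two outer chains have essentially equal height and $P$ takes the critical middle size; there I instead use the spine $L'\subset M'\subset N'$ inside $\C_{i+2}$ together with $a=P$ and $c=N$ to build $S$. A final routine chore is the type-$3^S$ case, where the three sets of a chain need not have consecutive sizes; the enlarged gaps only make the required containments easier, so the same five-set templates apply away from a handful of boundary configurations treated as above. Checking that the parity margin is genuinely consumed in these arguments — so that they correctly fail for a $3^R$ chain, as the statement demands — is the conceptual crux.
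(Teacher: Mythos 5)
Your strategy (contradiction, explicit five-set templates, interval endpoint arithmetic) runs parallel to the paper's, but you discard the one ingredient the paper's proof actually relies on --- the degree-2 replacement of Lemma~\ref{lemma:technical1} --- and the ``parity margin'' you substitute for it is false exactly at the boundary it must cover. Your decisive claim is that if $\C_{i+2}$ has type $3^L$ or $3^S$ and $\abs{N}\ge\abs{N'}$, then $L'\subseteq N$. Take $\C_i$ of type $3^S$ with sets of sizes $5,6,8$ and $\C_{i+2}$ of type $3^L$ with sets of sizes $6,7,8$. Then $\abs{N}=\abs{N'}=8$ and $\abs{L'}=6$ is even, but your own threshold demands $\abs{N}\ge\abs{L'}+3=9$; indeed $L'=\{x_{i-1},\dots,x_{i+4}\}\not\subseteq\{x_{i-4},\dots,x_{i+3}\}=N$. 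Your reflection WLOG does not remove this case: after reflecting, the outer chains still have equal height and the right-hand chain is the $3^S$ one with sizes $5,6,8$, where the odd-$\abs{L'}$ threshold $\abs{N}\ge\abs{L'}+4=9$ fails the same way. So equal-height configurations are genuine counterexamples to your intermediate claims (not, of course, to the lemma).

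This cannot be patched within your scheme, because every one of your templates consumes the failing containment. Regimes 1 and 3 invoke ``$L'\subseteq N$ is the parity fact'' directly; regime 2 needs $M'\subseteq N$, which for a $3^L$ chain requires $\abs{N}\ge\abs{N'}+3$, far more than ``$\C_i$ strictly taller'' (and your WLOG only gives $\abs{N}\ge\abs{N'}$); regime 3 also assumes $P\subset M$ with no justification. Worst of all, your fix for the ``residual configuration'' --- spine $L'\subset M'\subset N'$ with $a=P$, $c=N$ --- still requires $L'\subseteq N$ among the four relations $a,b\le c,d$, so it fails precisely in the configuration it was introduced to handle. The paper's proof avoids all of this: the top/bottom problem you cite as making the replacement ``unavailable'' is handled there by formally extending the Hasse diagram of the union of chains in both directions, after which every occupied vertex of $\C_i$ and $\C_{i+2}$ can be moved (without creating new containments) to a degree-2 position --- even sizes in $\C_i$, odd sizes in $\C_{i+2}$. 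With that parity offset in place, all needed cross-chain containments follow from size differences alone, and a short case split on $\abs{M}$ versus $\abs{B}$ and on the size of the single middle-chain set produces the skew-butterfly. Without that reduction, or an equivalent device for the equal-height cases, your argument does not go through.
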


\begin{proof}
By contradiction, suppose $\C_i$ is type $4, 3^R$ or $3^S$, $\abs{\A \cap \C_{i+1}}=1$ and $\C_{i+2}$ is type $3^L, 3^S$ or 4. If $\C_i$ or $\C_{i+2}$ is of type 4, then we may disregard one set to make it type $3^S$.   By similar reasoning as used in Lemma \ref{lemma:technical1}, we may assume all occupied vertices on the Hasse diagram of $\C_i \cup \C_{i+1} \cup \C_{i+2}$ from $\C_i$ and $\C_{i+2}$ have degree 2.  Let $L,M,N$ be the three sets in $\A \cap \C_i$ in increasing order, and let $A,B,C$ be the three sets in $\A \cap \C_{i+2}$ in increasing order.  Without loss of generality, we may assume $\abs{M}>\abs{B}$. This, in turn, implies that $\abs{M}=\abs{B}+1$ for otherwise $L,M,N,A,B$ would be a skew-butterfly with $L,A \subset M,N$ and $A \subset B \subset N$.  We will consider the possible locations of the set $S \in \A\cap \C_{i+1}$ on $\C_{i+1}$.  If $\abs{S} \le \abs{B}$, then $N,S,A,B,C$ is a skew-butterfly with $A,S \subset N,C$ and $A \subset B \subset C$.  If $\abs{S}>\abs{B}$, then $L,M,N,S,A$ is a skew-butterfly with $L,A \subset N,S$ and $L \subset M \subset N$. Thus, in either case we have a contradiction.
\end{proof}

By symmetry, we also have the following corollaries of Lemmas \ref{lemma:technical1} and \ref{lemma:technical2}:

\begin{corollary}
\label{cor:technical1}
Let $\C_i$ and $\C_{i+1}$ be two consecutive chains in the chain decomposition of a cyclic permutation.  If $\C_{i+1}$ is of type $4, 3^L$ or $3^S$, then $\abs{\A \cap \C_i} \le 1$.
\end{corollary}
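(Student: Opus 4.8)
The plan is to prove Corollary \ref{cor:technical1} by mirroring the proof of Lemma \ref{lemma:technical1}, exploiting the left--right symmetry of the configuration of two consecutive chains. The statement to be shown is exactly the reflection of Lemma \ref{lemma:technical1}: there the \emph{first} chain $\C_i$ is the heavy one (type $4$, $3^R$ or $3^S$) and the \emph{second} chain $\C_{i+1}$ is forced to be light; here the roles are interchanged, with $\C_{i+1}$ heavy (type $4$, $3^L$ or $3^S$) and $\C_i$ shown to satisfy $\abs{\A \cap \C_i} \le 1$. Passing from Lemma \ref{lemma:technical1} to its mirror simultaneously swaps the two chains and interchanges the facing-right and facing-left types, which is precisely the relabeling that turns the hypothesis $3^R$ into $3^L$ while leaving $3^S$ and type $4$ fixed.

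First I would reduce the type-$4$ case to type $3^S$ by discarding one set from $\A \cap \C_{i+1}$, as in Lemma \ref{lemma:technical1}. Then, working in the Hasse diagram of $\C_i \cup \C_{i+1}$, I would run the same replacement argument, but now applied to the occupied vertices of $\C_{i+1}$: every occupied degree-$4$ vertex of $\C_{i+1}$ should be replaceable by a distinct adjacent unoccupied degree-$2$ vertex, so that all three occupied vertices of $\C_{i+1}$ may be assumed to have degree $2$. I expect this to be the main obstacle: one must check that the replacement is available exactly when $\C_{i+1}$ is of type $3^L$ or $3^S$ (and fails for $3^R$), which is the mirror of the condition ``$3^R$ or $3^S$, not $3^L$'' used for $\C_i$ in Lemma \ref{lemma:technical1}. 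The delicate point is that the two-chain picture is \emph{not} invariant under a genuine global reflection of $\sigma$, since every chain grows to the left first; hence the size-parity distinguishing $3^R$ from $3^L$ is intrinsic and the correct orientation must be verified by hand rather than imported from an ambient automorphism.

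With the reduction in place, the rest is routine and identical to Lemma \ref{lemma:technical1} after interchanging the chains. Writing $A \subset B \subset C$ for the three sets of $\A \cap \C_{i+1}$ and assuming, for contradiction, that there are two sets $L \subset M$ in $\A \cap \C_i$, I would split into the three cases $\abs{L} < \abs{B} < \abs{M}$, $\abs{B} < \abs{L} < \abs{M}$, and $\abs{L} < \abs{M} < \abs{B}$, comparing the two sizes coming from $\C_i$ with the middle size $\abs{B}$ of $\C_{i+1}$. In each case I would exhibit five sets among $\{A,B,C,L,M\}$ forming a copy of $S$, contradicting that $\A$ is $S$-free, and conclude $\abs{\A \cap \C_i} \le 1$.

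Finally, I would note an alternative, more conceptual route via the order-reversing duality $A \mapsto [n] \setminus A$. Since the Hasse diagram of $S$ is a $5$-cycle, $S$ is self-dual, so complementation carries an $S$-free family to an $S$-free family while reversing all containments, reversing the order of consecutive chains, and interchanging $3^R$ and $3^L$ when $n$ is odd. For odd $n$ this reduces the corollary to Lemma \ref{lemma:technical1} (applied to the reversed permutation) at once; but because complementation preserves size-parity when $n$ is even, it does not swap $3^R$ and $3^L$ in that case, so the uniform argument remains the direct mirroring described above.
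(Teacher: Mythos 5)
Your proposal is correct and takes essentially the same route as the paper, whose entire proof of this corollary is the phrase ``by symmetry'' applied to Lemma \ref{lemma:technical1}: your mirrored argument --- swapping the roles of $\C_i$ and $\C_{i+1}$, interchanging $3^R$ with $3^L$, checking that the degree-$4$ replacement step is available exactly for types $4$, $3^L$, $3^S$ of $\C_{i+1}$ (the even-sized, hence degree-$4$, vertices can be pushed down/up), and rerunning the three-case comparison against $\abs{B}$ --- is precisely that symmetry written out in full. Your further observations are also accurate: the relevant symmetry is an order-reversing one of the abstract two-chain configuration rather than an ambient automorphism of the interval poset (a genuine reflection of $\sigma$ shifts the even-sized sets into the wrong chain), and the complementation shortcut $A \mapsto [n]\setminus A$, combined with reversing $\sigma$, works only for odd $n$ because of the parity of $\abs{A}$ versus $n-\abs{A}$.
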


\begin{corollary}
\label{cor:technical2}
Let $\C_i,\C_{i+1}$ and $\C_{i+2}$ be three consecutive chains in the chain decomposition of a cyclic permutation.  If $\C_{i+2}$ is of type $4, 3^L$ or $3^S$ and $\abs{\A \cap \C_{i+1}}=1$, then $\C_i$ is of type $0,1,2$ or $3^L$.
\end{corollary}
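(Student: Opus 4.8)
The plan is to obtain Corollary~\ref{cor:technical2} as the mirror image of Lemma~\ref{lemma:technical2}, in exactly the way that Corollary~\ref{cor:technical1} mirrors Lemma~\ref{lemma:technical1}. Reversing the orientation of the cyclic permutation $\sigma$ interchanges the notions of predecessor and successor, so the forward implication of Lemma~\ref{lemma:technical2} (a hypothesis on $\C_i$ together with $\C_{i+1}$ controlling $\C_{i+2}$) turns into a backward implication (a hypothesis on $\C_{i+2}$ together with $\C_{i+1}$ controlling $\C_i$), which is precisely the shape of the corollary. Under this reversal the two unbalanced three-element types are interchanged, $3^{R}\leftrightarrow 3^{L}$, while the types $0,1,2,3^{S},4$ are fixed; this accounts for the appearance of $3^{L}$ in the corollary where $3^{R}$ appears in the lemma. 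Since reversing $\sigma$ preserves the family of intervals and all inclusions among them, an $S$-free family remains $S$-free, so that, after transposing the type labels, Lemma~\ref{lemma:technical2} applied to the reversed permutation yields the statement of the corollary.

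Concretely I would run the argument of Lemma~\ref{lemma:technical2} verbatim, reading the three-chain window $\C_i,\C_{i+1},\C_{i+2}$ from right to left. First I would discard one set from any type-$4$ end to reduce $\C_i$ to type $3^{R}$ or $3^{S}$ and $\C_{i+2}$ to type $3^{L}$ or $3^{S}$. Next I would perform the degree-$2$ reduction of Lemma~\ref{lemma:technical1} on $\C_i$ (which is legitimate precisely because $\C_i$ is of type $3^{R},3^{S}$ or $4$) and the mirrored reduction of Corollary~\ref{cor:technical1} on $\C_{i+2}$ (legitimate because $\C_{i+2}$ is of type $3^{L},3^{S}$ or $4$), so that every occupied vertex in the Hasse diagram of $\C_i\cup\C_{i+1}\cup\C_{i+2}$ coming from $\C_i$ or $\C_{i+2}$ has degree $2$. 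Writing $L\subset M\subset N$ for the three sets of $\A\cap\C_i$ and $A\subset B\subset C$ for the three sets of $\A\cap\C_{i+2}$, and letting $S'$ be the unique set of $\A\cap\C_{i+1}$, I would assume without loss of generality $\abs{M}>\abs{B}$, force $\abs{M}=\abs{B}+1$ (otherwise $L,M,N,A,B$ already form a copy of $S$), and then split on whether $\abs{S'}\le\abs{B}$ or $\abs{S'}>\abs{B}$; in each case the same five-set configuration used in Lemma~\ref{lemma:technical2} produces a skew-butterfly, giving the desired contradiction.

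The step requiring the most care is the reconciliation of the reflection with the left-first convention used to build the decomposition. Reversing $\sigma$ turns the left-first chains into right-first chains, so I must check that the size-parity description of the types ($3^{R}$ being two sets of odd size and $3^{L}$ two sets of even size) is correctly transposed, and in particular that the degree-$2$ replacement, which is available for $3^{R},3^{S},4$ but not for $3^{L}$ in the original orientation, becomes available at the $\C_{i+2}$ end after reflection; this is exactly the content of Corollary~\ref{cor:technical1}. The remaining point to verify is that the three skew-butterfly configurations of Lemma~\ref{lemma:technical2} survive the reflection: because the skew-butterfly $S$ is self-dual, the reflected configurations (and, if one prefers to realize the reflection through complementation, the order-reversed ones) are again copies of $S$, so that no genuinely new case analysis is needed and the mirrored proof closes.
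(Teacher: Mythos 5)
Your proposal does establish the corollary, but notice that your ``concrete'' second paragraph is not actually a mirrored argument: it reproduces the paper's proof of Lemma~\ref{lemma:technical2} verbatim, and that is no accident. For a chain carrying exactly three sets the types $3^R$, $3^L$, $3^S$ are exhaustive, so the negation of ``$\C_i$ is of type $0,1,2$ or $3^L$'' is precisely ``$\C_i$ is of type $3^R$, $3^S$ or $4$''. Hence Corollary~\ref{cor:technical2} asserts the impossibility of the configuration ($\C_i$ of type $3^R,3^S$ or $4$; $\abs{\A\cap\C_{i+1}}=1$; $\C_{i+2}$ of type $3^L,3^S$ or $4$), and this is exactly the configuration assumed at the start of the contradiction argument proving Lemma~\ref{lemma:technical2}. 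In other words, this corollary is just the contrapositive of that lemma with the hypotheses rearranged; no reversal, reflection, or self-duality of $S$ is needed at all. (This is in contrast with Corollary~\ref{cor:technical1}, which is \emph{not} equivalent to Lemma~\ref{lemma:technical1} and genuinely requires a mirrored argument.)

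The reflection mechanism in your first paragraph, taken literally, has a gap. The decomposition is ``left-first'', so the chain decomposition built from the reversed permutation is a \emph{different} partition of the same intervals: one computes that its chain centered at $x_j$ consists of the odd-size sets of $\C_j$ together with the even-size sets of $\C_{j+1}$. Consequently a chain of type $3^L$ in $\{\C_i\}_{i=1}^n$ does not correspond to any chain of type $3^R$ in the reversed decomposition---its three sets are split between two of the new chains---so ``apply Lemma~\ref{lemma:technical2} to the reversed permutation and transpose the labels $3^R\leftrightarrow 3^L$'' is not a valid step as stated. The symmetry that does exist, and which is what one must use for Corollary~\ref{cor:technical1}, is local rather than global: the Hasse diagram of a (suitably extended) window $\C_i\cup\C_{i+1}\cup\C_{i+2}$ admits an order-reversing isomorphism swapping $\C_i$ with $\C_{i+2}$ and sending sets of size $k$ to sets of size $K-k$ for an odd constant $K$; this interchanges $3^R$ with $3^L$, and it preserves $S$-freeness because $S$ is self-dual. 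Your third paragraph gestures at exactly this point, but deferring the fix to Corollary~\ref{cor:technical1} is circular, since that corollary is itself only available ``by symmetry''. None of this sinks your proof, because the case analysis you actually run never invokes the reflection---but the cleanest write-up is a single sentence observing the contrapositive.
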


We now have sufficient information about which consecutive chain types are allowed to  prove Lemma \ref{lemma:mainlemma}:

\begin{proof}[Proof of Lemma \ref{lemma:mainlemma}]
We must show that the average intersection of $\A$ with chains from the decomposition is at most $2$.  To this end, we will form groups of chains such that the number of sets from $\A$ in each group is at most twice the size of that group.  

First, consider chains of type 4.  If there is a sequence of chains alternating between type 4 and type 0 spanning every chain in the chain decomposition, then it is easy to see that the average is at most 2.  Otherwise, take each maximal group of consecutive chains alternating between type $0$ and type $4$, beginning and ending with a type $4$ chain. Call such a group a 4-0-4 pattern (it may just consist of a single chain of type $4$).  If the group has length $\ell$, then there are $2\ell+2$ sets contributed from $\A$.  We will add additional chains to this group to decrease the average to 2.  By Lemma \ref{lemma:technical1}, if the chain following the type 4 chain on either side is not type 0, then it must be type 1.  In this case, we add the type 1 chain to the group.  Otherwise, we have a type 0 chain followed by a chain of type 0,1,2 or 3.  If it is type 3, we add both the type 0 and type 3 chain to our group.  Otherwise, we just add the type 0 chain.  In any case, if we have added $k$ more chains to our group (on both sides of the 4-0-4 pattern), then we have added a total of at most $2k-2$ more sets from $\A$.  Thus, in total, the group now consists of $k+\ell$ chains having at most $2k + 2\ell$ sets from $\A$, as desired.

Now, consider any remaining type 3 chain.  Lemma \ref{lemma:technical1} and Corollary \ref{cor:technical1} ensure that it has a type 1 or type 0 chain on at least one side (right or left).  By Lemma \ref{lemma:technical2} and Corollary \ref{cor:technical2} and by the previous grouping of the chains of type $4$, we know that this chain was not used by any group consisting of chains of type $4$.  Thus, every type $3$ chain may be grouped with its adjacent type $1$ or $0$ chain.  All remaining chains in the decomposition have at most 2 sets from $\A$ and so we may group them all together.

\end{proof}

We now derive the LYM-type inequality, Theorem \ref{skewbutterfly}, from Lemma \ref{lemma:mainlemma}.  

\begin{proof}
We will double count pairs $(A,\sigma)$ where $A \in \A$ and $\sigma$ is a cyclic permutation of $[n]$.  Let $f(A,\sigma)$ be the indicator function for $A\in \A$ and $A$ being an interval along $\sigma$.  For each $A \in \A$, there are $\abs{A}!(n-\abs{A})!$ cyclic permutations containing $A$ as an interval.  It follows that
\begin{displaymath}
\sum_{A \in \A} \sum_\sigma f(A,\sigma) = \sum_{A \in \A} \abs{A}!(n-\abs{A})!.
\end{displaymath}
On the other hand, Lemma \ref{lemma:mainlemma} implies
\begin{displaymath}
\sum_\sigma \sum_{A \in \A} f(A,\sigma) \le \sum_\sigma 2n = 2n!. 
\end{displaymath}
Dividing through by $n!$ gives
\begin{displaymath}
\sum_{A \in \A} \frac{1}{\binom{n}{\abs{A}}} \le 2,
\end{displaymath}
as desired.
\end{proof}

Finally, we deduce Theorem \ref{skew} from Theorem \ref{skewbutterfly}.

\begin{proof}
If $\A$ contains neither $[n]$ nor $\varnothing$, then the result follows easily from Theorem \ref{skew}. If $\A$ contains $[n]$, but there is an $n-1$ element set $A$ not contained in $\A$, then replacing $[n]$ with $A$ in $\A$ introduces no new relations and so yields another family of the same size without a skew-butterfly. Thus, in this case, Theorem \ref{skew} again yields the result.  If $\A$ contains $[n]$ and the entire $n-1^{st}$ level, let $\A' = \{A \in \A: \abs{A} \le n-2\}$.  Then, $\A'$ is an antichain, for otherwise we would have a skew-butterfly. Thus, $\abs{\A'} \le \binom{n}{\floor{n/2}}$ by Sperner's Theorem and so $\abs{\A} \le \binom{n}{\floor{n/2}} + n + 1$.  For $n \ge 5$ this implies $\abs{\A} \le  \binom{n}{\floor{n/2}} +  \binom{n}{\floor{n/2}+1}$.  An analogous argument works for the case when $\varnothing \in \A$.   If $n=4$ we give another argument (We are still assuming $\A$ contains all $n-1$ element sets).   If $\A'$ is a full level, then $\A$ contains  a skew-butterfly.  If $\A'$ is not a full level, then the equality case of Sperner's theorem implies $\abs{\A'} \le \binom{n}{\floor{n/2}}-1$, and so $\abs{\A} \le n + \binom{n}{\floor{n/2}}$ which yields the required bound when $n=4$.   The case $n=3$ is easily checked by hand.  
\end{proof}

We end this section by mentioning the relation between this approach and the double chain method.  It is not hard to see that a double chain has the exact same poset structure as two consecutive chains in the chain decomposition described above.  Namely, the degree 2 vertices from the Hasse diagram of consecutive chains correspond to the sets from the secondary chain of a double chain.  It follows that any forbidden subposet result that can be determined exactly with the double chain method can also be determined exactly using a decomposition of a cyclic permutation, and, thus, chain decompositions of the cycle may be viewed as a generalization of the double chain method.   

\section{Forbidding $Y_k$ and $Y_k'$}
\label{YkYk'}
We will use the same decomposition of the cycle as in Section \ref{S}.  The new bound we must prove is 

\begin{lemma}
\label{lemma:mainlemma2}
If $\A$ is a collection of intervals along a cyclic permutation $\sigma$ of $[n]$ which does not contain $Y_k$ or $Y_k'$ as a subposet, then 
\begin{displaymath}
\abs{\A} \le kn.
\end{displaymath} 
\end{lemma}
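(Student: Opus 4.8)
The plan is to mirror the proof of Lemma \ref{lemma:mainlemma}, reusing the same chain decomposition $\{\C_i\}_{i=1}^n$ of the intervals along $\sigma$ and again showing that $\A$ meets the chains in at most $k$ sets on average. The first step is the single-chain bound $\abs{\A \cap \C_i} \le k+1$: any $k+2$ nested intervals form a chain of $k+2$ elements, and $x_1 \le \dots \le x_k \le y,z$ embeds into any such chain by sending the two largest elements to $y$ and $z$, so $k+2$ sets on one chain already contain $Y_k$. Call a chain \emph{full} if $\abs{\A \cap \C_i}=k+1$; a full chain carries an excess of one over the target average $k$, while a chain with $\abs{\A \cap \C_i}\le k-1$ carries spare capacity. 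It then suffices to show that the excess contributed by full chains is absorbed by the spare capacity of nearby chains.

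Next I would prove the two-chain lemmas analogous to Lemma \ref{lemma:technical1} and Corollary \ref{cor:technical1}. The sets of a full chain $\C_i$ form a chain $C_1 \subset \cdots \subset C_{k+1}$; if $\C_{i+1}$ contains a set $D \supsetneq C_k$ incomparable to $C_{k+1}$, then $C_1 \subset \cdots \subset C_k$ together with the incomparable pair $C_{k+1},D$ is a copy of $Y_k$, and symmetrically a set $D \subsetneq C_2$ incomparable to $C_1$ produces $Y_k'$. Carrying out the degree-two reduction of Lemma \ref{lemma:technical1} to decide which intervals of $\C_{i+1}$ are forced into such positions should yield two facts: two consecutive chains cannot both be full (equivalently, at most $2k+1$ sets of a double chain avoid both $Y_k$ and $Y_k'$), and every full chain has at least one neighbour with $\abs{\A \cap \C}\le k-1$. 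As in Section \ref{S}, full chains will split into left- and right-facing types according to which side admits the compensating neighbour.

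The third step is the three-chain lemma, in the spirit of Lemma \ref{lemma:technical2} and Corollary \ref{cor:technical2}: a full chain cannot be flanked by two neighbours each meeting $\A$ in $k$ sets (filling the gaps of the full chain from both sides at once creates $Y_k$), and more precisely a chain lying strictly between two full chains must satisfy $\abs{\A \cap \C}\le k-2$. This is exactly what stops the near-extremal ``staircase'' configurations, in which a full chain sits beside a $k$-set chain and contributes $2k+1$ to a consecutive pair, from propagating around the whole cycle.

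Finally I would conclude with a discharging argument parallel to the end of the proof of Lemma \ref{lemma:mainlemma}: each full chain discharges its single unit of excess onto a distinct unit of spare capacity in an adjacent chain with at most $k-1$ sets. The two-chain lemma guarantees such a neighbour exists, and the three-chain lemma guarantees the assignment can be made injective, the only delicate case being a deficient chain flanked by two full chains, which is forced to have spare capacity at least two. Grouping the chains into the resulting blocks, each block meets $\A$ in at most $k$ sets per chain, giving $\abs{\A}\le kn$. I expect the main obstacle to be precisely this bookkeeping: because the pairwise bound only gives $2k+1$ per consecutive pair, there is an $O(n)$ slack, so the orientation of the full chains and the three-chain lemma must be combined carefully (as in the $4$-$0$-$4$ analysis for $S$) to keep the discharging globally consistent around the cycle.
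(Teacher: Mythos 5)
Your outline follows the same architecture as the paper's proof of Lemma \ref{lemma:mainlemma2} (same decomposition, the single-chain bound $k+1$, a two-chain lemma forcing a deficient neighbour, a three-chain lemma, then discharging), but the three-chain lemma you rely on is false, and that is exactly where your discharging breaks. You claim that a chain lying strictly between two full chains must satisfy $\abs{\A \cap \C} \le k-2$. Counterexample for $k=2$: let
$\A \cap \C_i = \{\{x_{i-1},x_i\},\ \{x_{i-1},x_i,x_{i+1}\},\ \{x_{i-2},x_{i-1},x_i,x_{i+1}\}\}$,
$\A \cap \C_{i+1} = \{\{x_i,x_{i+1},x_{i+2}\}\}$,
$\A \cap \C_{i+2} = \{\{x_{i+1},x_{i+2}\},\ \{x_{i+1},x_{i+2},x_{i+3}\},\ \{x_i,x_{i+1},x_{i+2},x_{i+3}\}\}$,
and nothing on the other chains. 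Checking containments, the only relations are the two $3$-chains together with $\{x_{i+1},x_{i+2}\} \subset \{x_i,x_{i+1},x_{i+2}\} \subset \{x_i,x_{i+1},x_{i+2},x_{i+3}\}$; no set has both a set below it and two sets above it, and no set has both a set above it and two sets below it, so the family contains neither $Y_2$ nor $Y_2'$. Yet both outer chains are full and the middle chain has $k-1 = 1 > k-2$ sets. Consequently your injectivity argument does not go through: both full chains here see the middle chain as an admissible compensating neighbour, and if both discharge onto it you get $3k+1$ sets on three chains. Your definition of the left/right orientation (``which side admits the compensating neighbour'') is not intrinsic and cannot repair this, since in the example both fulls admit the same side.

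What is missing is the finer content of the paper's two-chain lemma (Lemma \ref{lemmaX}). The paper classifies a full chain by the \emph{parity} of its second-largest set (odd $=$ type $k+1^R$, even $=$ type $k+1^L$) and proves not only that a type $k+1^R$ chain forces $\abs{\A \cap \C_{i+1}} \le k-1$, but the rigidity statement that equality forces the largest set of $\A \cap \C_{i+1}$ to have the \emph{same size} as the second-largest set of $\A \cap \C_i$ (symmetrically for $k+1^L$ and the preceding chain, Corollary \ref{corY}). This gives a canonical orientation: $R$-fulls always discharge right, $L$-fulls always discharge left, so the only possible collision is the pattern $(k+1^R, \mathrm{mid}, k+1^L)$, and there the rigidity statement forces the largest set of the middle chain to have simultaneously an odd and an even size, whence $\abs{\A \cap \mathrm{mid}} \le k-2$; that is the correct three-chain lemma. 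In my counterexample both fulls are of type $R$, so under the paper's orientation there is no collision: the right-hand full discharges onto $\C_{i+3}$, which Lemma \ref{lemmaX} guarantees has at most $k-1$ sets. Note also that your sketch of the two-chain lemma (sets above $C_k$ incomparable to $C_{k+1}$ give $Y_k$, dually for $C_2$) is not by itself enough even for the bound $k-1$: since the sizes of the $C_j$ need not be consecutive, the window of permitted sets in $\C_{i+1}$ can be large, and one needs the counting argument of Lemma \ref{lemmaX} (any $k-1$ sets in the window below its top element, together with $C_1$, $B$ and $C_{k+1}$, form a $Y_k'$) --- and it is precisely that argument which produces the size-equality condition your proof needs.
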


As before, we will consider groups of consecutive chains.    Each chain, $\C$, with $k+1$ sets in $\C \cap \A$ is characterized by whether the second largest element in $\A \cap \C$ faces left or faces right (has even or odd cardinality, respectively).  We say that a chain with $k+1$ elements of $\A$ is of type $k+1^R$ if the second largest element faces right and $k+1^L$ if it faces left.  

\begin{lemma}
\label{lemmaX}
Let $\C_i$ and $\C_{i+1}$ be consecutive chains in the decomposition.  If $\C_i$ is of type $k+1^R$, then $\abs{\A \cap \C_{i+1}} \le k-1$, and  $\abs{\A \cap \C_{i+1}} = k-1$ implies that the largest element of $\A \cap \C_{i+1}$ is the same size as the second largest element of $\A \cap \C_{i}$.
\end{lemma}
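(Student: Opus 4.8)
The plan is to work entirely inside the poset $\A \cap (\C_i \cup \C_{i+1})$ and to exploit the fact that we are forbidding \emph{weak} copies: a copy of $Y_k$ is nothing more than an element $w$ that is the top of a $k$-element chain of $\A$ and has at least two elements of $\A$ strictly above it, while dually a copy of $Y_k'$ is an element that is the bottom of a $k$-chain with at least two elements strictly below it. The first thing I would record is the comparability pattern between two consecutive chains. Writing the size-$s$ members of $\C_i$ and $\C_{i+1}$ as $a_s$ and $b_s$, the interval description shows that $a_s$ and $b_{s'}$ are comparable whenever $\abs{s-s'}\ge 2$, while for $\abs{s-s'}\le 1$ comparability is governed by a simple parity rule (in particular $a_s \| b_s$ always). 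The single consequence I really need is that the second largest member $M=a_{s_k}$ of $\A\cap\C_i$, whose cardinality $s_k$ is \emph{odd} because $\C_i$ is of type $k+1^R$, is comparable to every member of $\C_{i+1}$ except the one of its own size: every $b_s$ with $s<s_k$ lies below $M$ and every $b_s$ with $s>s_k$ lies above $M$.

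The heart of the argument is then a single application of the $Y_k$ criterion. Let $a_{s_1}\subset\dots\subset a_{s_{k+1}}$ be the chain $\A\cap\C_i$, so that $M=a_{s_k}$ and the chain $a_{s_1}\subset\dots\subset a_{s_k}$ already exhibits $a_{s_k}$ as the top of a $k$-chain. The element $a_{s_{k+1}}$ lies strictly above $a_{s_k}$; if in addition some $b_s\in\A$ had size $s>s_k$, then by the comparability above $b_s$ would also lie strictly above $a_{s_k}$, giving two elements above the top of a $k$-chain and hence a copy of $Y_k$. Therefore every element of $\A\cap\C_{i+1}$ has size at most $s_k$, and all of them except a possible copy of size exactly $s_k$ lie below $a_{s_k}$, and so below $a_{s_{k+1}}$ as well. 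This already explains why the extremal behaviour should concentrate at size $s_k$.

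To turn this into the bound $\abs{\A\cap\C_{i+1}}\le k-1$ I would next use both forbidden posets to control the height. Since a chain of length $k+2$ contains both $Y_k$ and $Y_k'$, the poset $\A\cap(\C_i\cup\C_{i+1})$ has height at most $k+1$; as $\A\cap\C_i$ already realises height $k+1$, it is a maximum chain, and every member of $\C_{i+1}$ must be fitted into the existing height levels without lengthening any chain. Grouping elements by height yields $k+1$ antichains, each of size at most $2$ (the two chains bound the width by $2$) and each already containing one $a_{s_j}$; hence $\abs{\A\cap\C_{i+1}}$ equals the number of height levels that also contain a member of $\C_{i+1}$. Moreover, because two comparable sets have different heights, a member of $\C_{i+1}$ sharing the level of $a_{s_j}$ must be \emph{incomparable} to $a_{s_j}$, which by the parity rule is very restrictive — and since $s_k$ is odd, the only set that can share the level of $a_{s_k}$ is the size-$s_k$ set $b_{s_k}$.

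I would finish by showing that at least two of the $k+1$ levels are free of members of $\C_{i+1}$: the top level, since no member of $\C_{i+1}$ can sit above $a_{s_k}$, and one further level forced by the odd parity of $s_k$; tracking the unique way the count $k-1$ can be attained then pins the largest member of $\A\cap\C_{i+1}$ to the size-$s_k$ set occupying the level of $a_{s_k}$, which is exactly the asserted equality case. The main obstacle is precisely this final counting step: the comparabilities fail only for sizes differing by at most one, in a parity-dependent way, so deciding which $b_s$ can coexist with the spine $\{a_{s_j}\}$ at a common height without creating a length-$(k+2)$ chain requires a careful case analysis, and the borderline configurations (where a size-$s_k$ or size-$(s_k\pm1)$ set behaves exceptionally near $a_{s_k}$ and $a_{s_{k+1}}$) are where I expect the real work, and the verification of the equality statement, to lie.
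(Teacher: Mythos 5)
Your first two paragraphs are correct and coincide with the first half of the paper's argument: the parity rule for comparabilities between $\C_i$ and $\C_{i+1}$ is right, and the $Y_k$ application correctly shows that every member of $\A\cap\C_{i+1}$ has size at most $s_k$, with all of them except possibly $b_{s_k}$ lying below $a_{s_k}$. The gap is in the second half. Your height-level counting (height $\le k+1$, width $\le 2$, level $j$ occupied by $a_{s_j}$, top level free) can never yield more than $\abs{\A\cap\C_{i+1}}\le k$, and the decisive step --- ``one further level forced by the odd parity of $s_k$'' --- is not merely unproven but unprovable from the constraints you have assembled. Concretely, take $k=2$ and consider $\A\cap\C_i=\{a_2,a_3,a_4\}$ (type $3^R$) together with $\A\cap\C_{i+1}=\{b_1,b_3\}$, i.e.\ $a_2=\{x_{i-1},x_i\}$, $a_3=\{x_{i-1},x_i,x_{i+1}\}$, $a_4=\{x_{i-2},x_{i-1},x_i,x_{i+1}\}$, $b_1=\{x_{i+1}\}$, $b_3=\{x_i,x_{i+1},x_{i+2}\}$. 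Here $b_1\subset a_3\subset a_4$, $b_1\subset b_3$, and $b_3$ is incomparable to every $a_s$. This configuration satisfies every condition in your outline: the height is exactly $k+1=3$ (there is no $(k+2)$-chain anywhere), each height level has at most two elements, the top level contains only $a_4$, and the only member of $\C_{i+1}$ sharing the level of $a_{s_k}=a_3$ is $b_{s_k}=b_3$, exactly as your parity rule dictates. Yet $\abs{\A\cap\C_{i+1}}=2>k-1$. The configuration is of course forbidden, but the only witness is the copy of $Y_2'$ formed by $a_4\supset a_3\supset a_2,b_1$, a subposet whose two minimal elements are \emph{incomparable}; no argument phrased in terms of chain lengths or heights can detect it. This also shows that your closing diagnosis --- that the remaining case analysis is about avoiding ``length-$(k+2)$ chains'' --- points in the wrong direction: in the critical configurations there are no long chains, only genuine copies of $Y_k'$.

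What is missing is a direct construction of $Y_k'$, and that is precisely what the paper's proof supplies. With $A=a_{s_2}$ (second smallest) and $B=a_{s_k}$ (second largest), the paper fixes a threshold set $X\in\C_{i+1}$, of size $\abs{A}-1$ if $\abs{A}$ is even and of size $\abs{A}$ if $\abs{A}$ is odd, chosen so that every set of $\C_{i+1}$ of size exceeding $\abs{X}$ contains $a_{s_1}$, and sets $Y=b_{s_k}$. Then (i) any member of $\A\cap\C_{i+1}$ of size less than $\abs{X}$ lies below $A$, so the $k$-chain $a_{s_2}\subset\cdots\subset a_{s_{k+1}}$ together with the two sets $a_{s_1}$ and that member below its bottom is a $Y_k'$ (this is exactly the witness $a_4\supset a_3\supset a_2,b_1$ in the example above); and (ii) if $k-1$ members had sizes in $[\abs{X},\abs{B})$, then the largest $k-2$ of them followed by $B\subset a_{s_{k+1}}$ would be a $k$-chain having both $a_{s_1}$ and the smallest such member below its bottom element --- again a $Y_k'$, using the threshold property of $X$. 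Hence at most $k-2$ members have size below $\abs{B}$, which gives simultaneously the bound $\abs{\A\cap\C_{i+1}}\le k-1$ and the equality condition that the one remaining member must be $Y$, of size $\abs{B}=s_k$. Note that your equality-case plan breaks for the same reason as the bound: your constraints do not exclude $k-1$ members sitting at levels $1,\dots,k-1$, all below $a_{s_k}$, with largest member of size strictly less than $s_k$; ruling that out again requires construction (ii), not level counting.
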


\begin{proof}
Let $A$ be the second smallest set in $\A \cap \C_i$ and $B$ be the second largest. Let $Y$ be the set of size $\abs{B}$ in $\C_{i+1}$, and if $A$ is degree $2$ (left), then let $X$ be the set of size $\abs{A}-1$ in $\C_{i+1}$. If $A$ is degree $4$, then let $X$ be the set of size $\abs{A}$ in $\C_{i+1}$. In either case, let $\R$ be the collection of those sets in $\C_{i+1}$ (not necessarily in $\A$) having sizes strictly between $\abs{X}$ and $\abs{Y}$ (See Figure \ref{figure 5}).  Every set in $\C_{i+1} \cap \A$  must lie in $\R \cup \{X\}\cup\{Y\}$ for otherwise we would have a $Y_k$ or $Y_k'$.    Now, $\abs{\A \cap \R} \le k-2$ for otherwise we would have a $k+2$ chain (actually, $\abs{\A \cap \R} \le k-3$ in the case $A$ is degree $4$).  If we take $k-1$ sets from $\R \cup \{X\}$, then we have a $Y_k'$ and so  we can take at most $k-2$ sets total from $\R \cup \{X\}$.  It follows that $\abs{\A \cap \C_{i+1}} \le k-1$ with equality only if $Y \in \A$. 

\end{proof}

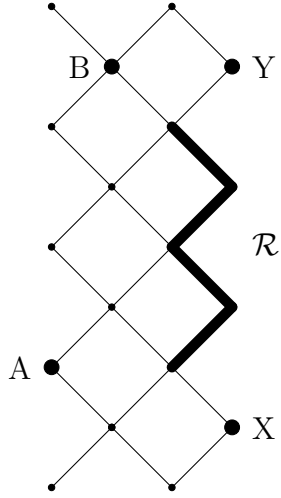
\begin{figure}[h]
\begin{center}
\begin{tikzpicture}[line cap=round,line join=round,>=triangle 45,x=1.0cm,y=1.0cm, scale = 0.8]
\draw (4.0,7.0)-- (3.0,8.0);
\draw (3.0,6.0)-- (4.0,7.0);
\draw (3.0,6.0)-- (4.0,5.0);
\draw (4.0,5.0)-- (3.0,4.0);
\draw (3.0,4.0)-- (4.0,3.0);
\draw (5.0,8.0)-- (6.0,7.0);
\draw (6.0,7.0)-- (5.0,6.0);  
\draw [line width=4.0pt] (5.0,6.0)-- (6.0,5.0);
\draw [line width=4.0pt] (6.0,5.0)-- (5.0,4.0);
\draw [line width=4.0pt] (5.0,4.0)-- (6.0,3.0);
\draw (5.0,8.0)-- (4.0,7.0);
\draw (4.0,7.0)-- (5.0,6.0);
\draw (5.0,6.0)-- (4.0,5.0);
\draw (4.0,5.0)-- (5.0,4.0);
\draw (5.0,4.0)-- (4.0,3.0);
\draw (4.0,3.0)-- (3.0,2.0);
\draw (3.0,2.0)-- (4.0,1.0);
\draw [line width=4.0pt] (6.0,3.0)-- (5.0,2.0);
\draw (5.0,2.0)-- (6.0,1.0);
\draw (4.0,3.0)-- (5.0,2.0);
\draw (5.0,2.0)-- (4.0,1.0);
\draw (3.1,7.35) node[anchor=north west] {B};
\draw (2.1,2.35) node[anchor=north west] {A};
\draw (4.0,1.0)-- (3.0,0.0);
\draw (6.0,1.0)-- (5.0,0.0);
\draw (4.0,1.0)-- (5.0,0.0);
\draw (6.15,7.35) node[anchor=north west] {Y};
\draw (6.15,1.35) node[anchor=north west] {X};
\draw (6.15,4.410097670924116) node[anchor=north west] {$\mathcal{R}$};
\begin{scriptsize}
\draw [fill=black] (3.0,8.0) circle (1.5pt);
\draw [fill=black] (4.0,7.0) circle (3.5pt);
\draw [fill=black] (3.0,6.0) circle (1.5pt);
\draw [fill=black] (4.0,5.0) circle (1.5pt);
\draw [fill=black] (3.0,4.0) circle (1.5pt);
\draw [fill=black] (4.0,3.0) circle (1.5pt);
\draw [fill=black] (5.0,8.0) circle (1.5pt);
\draw [fill=black] (6.0,7.0) circle (3.5pt);
\draw [fill=black] (5.0,6.0) circle (1.5pt);
\draw [fill=black] (6.0,5.0) circle (1.5pt);
\draw [fill=black] (5.0,4.0) circle (1.5pt);
\draw [fill=black] (6.0,3.0) circle (1.5pt);
\draw [fill=black] (3.0,2.0) circle (3.5pt);
\draw [fill=black] (4.0,1.0) circle (1.5pt);
\draw [fill=black] (5.0,2.0) circle (1.5pt);
\draw [fill=black] (6.0,1.0) circle (3.5pt);
\draw [fill=black] (3.0,0.0) circle (1.5pt);
\draw [fill=black] (5.0,0.0) circle (1.5pt);
\draw [fill=black] (4.0,1.0) circle (1.5pt);
\end{scriptsize}
\end{tikzpicture}
\end{center}
\caption{The sets $A$, $B$, $X$ and $Y$ are shown, and the collection $\mathcal{R}$ is marked in the case $A$ is degree $2$.}
\label{figure 5}
\end{figure}

By a symmetric argument we have
\begin{corollary}
\label{corY}
Let $\C_i$ and $\C_{i+1}$ be consecutive chains in the decomposition.  If $\C_{i+1}$ is of type $k+1^L$, then $\abs{\A \cap \C_{i}} \le k-1$, and  $\abs{\A \cap \C_{i}} = k-1$ implies that the largest element of $\A \cap \C_{i}$ is the same size as the second largest element of $\A \cap \C_{i+1}$.
\end{corollary}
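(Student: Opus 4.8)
The plan is to prove Corollary \ref{corY} as the mirror image of Lemma \ref{lemmaX}, running the identical argument with the roles of the left neighbour $\C_i$ and the right neighbour $\C_{i+1}$ interchanged, and with ``faces left'' and ``faces right'' interchanged. Conceptually this is the left--right reflection of the cyclic permutation, which turns a chain of type $k+1^R$ into one of type $k+1^L$, swaps each chain with its opposite neighbour, and preserves the property of forbidding both $Y_k$ and $Y_k'$ (a reflection is merely a relabelling of the ground set, so it preserves all subposet relations). In this way the hypothesis ``$\C_{i+1}$ is of type $k+1^L$, constraining its left neighbour $\C_i$'' is matched against the hypothesis of Lemma \ref{lemmaX}.

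Concretely, I would let $B$ be the second largest and $A$ the second smallest set of $\A \cap \C_{i+1}$, which is now the constraining chain; by hypothesis $B$ faces left. Mirroring the construction of Lemma \ref{lemmaX}, I set $Y$ to be the set of size $\abs{B}$ lying in $\C_i$, and I set $X$ to be the set of $\C_i$ of size $\abs{A}-1$ or $\abs{A}$ according to whether $A$ is a degree $2$ or a degree $4$ vertex of the Hasse diagram of $\C_i \cup \C_{i+1}$; I let $\R$ be the sets of $\C_i$ of size strictly between $\abs{X}$ and $\abs{Y}$. Exactly as before, every set of $\A \cap \C_i$ must lie in $\R \cup \{X\} \cup \{Y\}$, since otherwise it would form a copy of $Y_k$ or $Y_k'$ together with the chain of $k+1$ sets in $\C_{i+1}$; the chain-length bound gives $\abs{\A \cap \R} \le k-2$, and taking $k-1$ sets of $\R \cup \{X\}$ would create a copy of $Y_k$ (or $Y_k'$), so at most $k-2$ sets lie in $\R \cup \{X\}$. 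Hence $\abs{\A \cap \C_i} \le k-1$, with equality only if $Y \in \A$, which is precisely the assertion that the largest element of $\A \cap \C_i$ has the same size $\abs{B}$ as the second largest element of $\A \cap \C_{i+1}$.

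The point requiring care---and the only real obstacle---is that the double chain formed by two consecutive chains of the decomposition is chiral: it is invariant neither under complementation $A \mapsto [n]\setminus A$ (which reverses inclusion, hence interchanges ``largest'' and ``smallest'' and would yield the wrong equality condition) nor under any reflection fixing the decomposition (no reflection preserves the left-first chains, as reflecting sends them to right-first chains). Consequently the corollary cannot be deduced from Lemma \ref{lemmaX} by invoking a single poset automorphism; the cross-chain containments (the zig-zag between $\C_i$ and $\C_{i+1}$), the classification of vertices as degree $2$ or degree $4$, and the parity convention identifying ``faces left/right'' with even/odd cardinality all behave differently for the left chain than for the right chain, and so must be re-verified in the mirrored setting. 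Once this bookkeeping is checked---in particular that type $k+1^L$ forces the even-cardinality placement of $B$ that makes the construction of $X$, $Y$ and $\R$ inside $\C_i$ legitimate---the counting is word-for-word that of Lemma \ref{lemmaX}, and the stated equality condition (largest of $\C_i$ equals second largest of $\C_{i+1}$) follows.
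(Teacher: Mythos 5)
Your proof is correct and is essentially the paper's own: the paper disposes of Corollary \ref{corY} with the single phrase ``by a symmetric argument,'' and what you have written out---the mirrored construction of $X$, $Y$ and $\R$ inside $\C_i$, with the degree-$2$/degree-$4$ and parity conventions re-verified for the right-hand chain $\C_{i+1}$ now playing the constraining role---is exactly that argument made explicit. Your further observation that the two-chain poset is chiral, so the corollary follows by re-running Lemma \ref{lemmaX} mutatis mutandis rather than by invoking a literal reflection or complementation automorphism, is a correct reading of what ``symmetric'' must mean here.
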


\begin{lemma}
There are no $3$ consecutive chains $\C_i, \C_{i+1},\C_{i+2}$ such that $\C_i$ is type $k+1^R$, $\C_{i+1}$ is type $k-1$ and $\C_{i+1}$ is type $k+1^L$.
\end{lemma}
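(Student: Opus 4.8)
The plan is to obtain this lemma as a short parity consequence of the equality clauses already established in Lemma \ref{lemmaX} and Corollary \ref{corY}; no fresh case analysis on the Hasse diagram is required. Suppose, for contradiction, that there are three consecutive chains with $\C_i$ of type $k+1^R$, with $\abs{\A \cap \C_{i+1}} = k-1$, and with $\C_{i+2}$ of type $k+1^L$ (this is the configuration in the statement, reading the final ``$\C_{i+1}$'' as $\C_{i+2}$). Let $G$ denote the largest set of $\A \cap \C_{i+1}$, which is a single well-defined set because $\C_{i+1}$ is a chain.

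The first step is to apply Lemma \ref{lemmaX} to the consecutive pair $\C_i,\C_{i+1}$. Since $\C_i$ is of type $k+1^R$ and $\abs{\A \cap \C_{i+1}}$ equals the extremal value $k-1$, the equality clause of the lemma gives that $\abs{G}$ equals the cardinality of the second largest set of $\A \cap \C_i$. By the definition of type $k+1^R$ this second largest set faces right and hence has odd cardinality, so $\abs{G}$ is odd.

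The second step is to apply Corollary \ref{corY} after the index shift sending its pair $(\C_i,\C_{i+1})$ to our pair $(\C_{i+1},\C_{i+2})$, so that $\C_{i+2}$ plays the role of the type $k+1^L$ chain. Again $\abs{\A \cap \C_{i+1}} = k-1$ is extremal, so the equality clause forces $\abs{G}$ to equal the cardinality of the second largest set of $\A \cap \C_{i+2}$. By the definition of type $k+1^L$ that set faces left and hence has even cardinality, so $\abs{G}$ is even. Since $\abs{G}$ cannot be simultaneously odd and even, we reach the desired contradiction.

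I do not expect a serious obstacle here: the two-chain lemmas were deliberately stated with exactly the equality characterizations needed, so the only point requiring attention is verifying that both equality clauses are genuinely applicable. This holds because $\abs{\A \cap \C_{i+1}}$ is \emph{exactly} $k-1$ (the extremal value in both Lemma \ref{lemmaX} and Corollary \ref{corY}) and the two outer chains carry the prescribed right- and left-facing orientations, which fix opposite parities for the relevant second-largest sets. If one preferred to avoid quoting the equality clauses, the same contradiction could be reached by re-running the size-tracking argument in the proof of Lemma \ref{lemmaX} from both sides at once, but the parity shortcut above is cleaner and entirely self-contained.
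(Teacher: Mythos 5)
Your proof is correct and essentially identical to the paper's: the paper likewise observes that the second largest elements of $\A \cap \C_i$ and $\A \cap \C_{i+2}$ must have different sizes (your odd/even parity remark is exactly why) and then invokes the equality clauses of Lemma \ref{lemmaX} and Corollary \ref{corY} to force $\abs{\A \cap \C_{i+1}} \le k-2$, a contradiction. You also correctly resolved the statement's typo by reading the second ``$\C_{i+1}$'' as $\C_{i+2}$.
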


\begin{proof}
Since $\C_{i}$ is type $k+1^R$ and $\C_{i+2}$ is type $k+1^L$, the respective second largest elements of $\A \cap \C_i$ and $\A \cap \C_{i+2}$ must be of different sizes.  It follows from Lemma \ref{lemmaX} and Corollary \ref{corY} that we can have at most $k-2$ sets in $\A \cap \C_{i+1}$.
\end{proof}

We now have what we need to prove Lemma \ref{lemma:mainlemma2}.

\begin{proof}[Proof of Lemma \ref{lemma:mainlemma2}]
Every group of $3$ consecutive chains of type $k+1^R$, $\le k-2$ and $k+1^L$, respectively,  may be grouped together yielding a total of at most $3k$ sets on $3$ chains.  All remaining chains of type $k+1^R$ may be paired with a chain of at most $k-1$ sets from $\A$ following it, and all remaining chains of type $k+1^L$ may be paired with a chain of at most $k-1$ sets preceding it.  It follows that $\A$ consists of at most $kn$ intervals along the cyclic permutation $\sigma$.
\end{proof}

Theorem \ref{YkLYM} follows directly from Lemma \ref{lemma:mainlemma2} as before.  It remains to use Theorem \ref{YkLYM} to deduce Theorem \ref{Yk}.

\begin{proof}[Proof of Theorem \ref{Yk}]
Let $\A \subset 2^{[n]}$ be a $Y_k$ and $Y_k'$-free family.  If neither of $\varnothing$ and $[n]$ are in $\A$, then the result is immediate from Theorem \ref{YkLYM}.  If $\varnothing$ and $[n]$ are in $\A$, then $\A \setminus \{\varnothing,[n]\}$ is $k$-chain free and so has size at most $\Sigma(n,k-1)$ by Erd\H{o}s's theorem. Since $2 + \Sigma(n,k-1) \le \Sigma(n,k)$ for $n \ge k+1$ and $k\ge 2$, we are done.  Finally, suppose that $\varnothing \in \A$ and $[n] \not\in \A$.  If there is a singleton set $\{x\} \not\in \A$, then we may replace $\varnothing$ with $\{x\}$ and we are back in the first case.  Hence, we may assume that $\A$ contains every singleton set ($\binom{[n]}{1}\subset \A$).  Let $\A' = \A \setminus \{\{\varnothing\} \cup \binom{[n]}{1}\}$.  Now, $\A'$ is $k$-chain free, so again by Erd\H{o}s's theorem, $\abs{\A'} \le \Sigma(n,k-1)$.  It follows that $\abs{\A} \le 1 + n + \Sigma(n,k-1)$. If $\A'$ contains $k-1$ full levels, then we have a copy of $Y_k'$, so we may assume we do not.  However, then we may apply the equality case of Erd\H{o}s's theorem to obtain that $\abs{\A} \le n + \Sigma(n,k-1)$.  Finally, since $n \ge k+1$ implies that the $k^{th}$ largest level has size at least $n$, we have $\abs{\A} \le \Sigma(n,k)$, as desired.
\end{proof}

\section{Forbidding induced $Y$ and $Y'$ and second cycle decomposition}
As in the proof of Theorem \ref{skew}, we will need to prove a lemma which bounds the largest intersection of an induced $Y$,$Y'$-free family with the set of intervals along a cyclic permutation.

\begin{lemma}
\label{ortho}
If $\A$ is a collection of intervals along a cyclic permutation $\sigma$ of $[n]$ which does not contain $Y$ or $Y'$ as an induced subposet, then 
\begin{displaymath}
\abs{\A} \le 2n.
\end{displaymath} 
\end{lemma}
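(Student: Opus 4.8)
The plan is to mirror the structure of Section~\ref{S}: I would reduce Lemma~\ref{ortho} to an averaging statement over a chain decomposition of the intervals, after first extracting the restrictions that forbidding induced $Y$ and $Y'$ places on $\A$. The facts I would record first are local. Forbidding an induced $Y$ says there is no chain $W\subsetneq X$ in $\A$ for which $X$ has two incomparable supersets in $\A$; hence \emph{every} set of $\A$ that is not minimal in $\A$ has all of its supersets in $\A$ lying on a single chain. Dually, forbidding induced $Y'$ forces every non-maximal set of $\A$ to have all of its subsets in $\A$ on a single chain. In the Hasse diagram of $\A$ this means upward branching occurs only at minimal elements and downward branching only at maximal elements. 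I would also use two facts special to intervals: the sets of $\A$ sharing a fixed left endpoint form a chain (two arcs with the same left endpoint are nested), as do those sharing a fixed right endpoint; consequently the minimal elements of $\A$, and likewise the maximal elements, form an antichain of arcs, and such an antichain uses each left endpoint at most once and so has size at most $n$. These observations already single out the extremal picture of two consecutive levels and explain why the bound should be $2n$.

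With these in hand, the analysis would proceed along a chain decomposition as in Section~\ref{S}, and here is exactly where the argument must change, which is the role of the ``orthogonal'' second decomposition $\{\D_i\}$. In the butterfly argument each chain met $\A$ in at most four sets, so grouping consecutive chains and averaging sufficed; no such per-chain bound is available now, since a chain carries no incomparable pair and may lie entirely inside $\A$. I would therefore not attempt to bound $\abs{\A\cap\D_i}$ by a constant, but rather exploit the interaction between consecutive chains. The crucial consecutive-chain lemma to prove, in the spirit of Lemmas~\ref{lemma:technical1} and~\ref{lemmaX}, is that two consecutive chains cannot both meet $\A$ heavily: if $\D_i$ contains a $2$-chain $P\subsetneq Q$ of $\A$, then $Q$ is non-minimal, so by the first structural fact every superset of $Q$ in $\A$, including any set of $\A$ in the neighbouring chain $\D_{i+1}$ that contains $Q$, must be comparable to the supersets of $Q$ inside $\D_i$; the geometry of two consecutive chains makes this impossible unless $\D_{i+1}$ contributes almost nothing above $Q$, with the dual statement below $Q$ coming from $Y'$. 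Packaging these into consecutive-chain lemmas and then grouping should drive the average intersection down to $2$.

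The main obstacle is precisely this averaging step, and it is the reason the new decomposition must be \emph{orthogonal} to the one of Section~\ref{S}: a single long nested chain of $\A$ sits inside one chain $\C_i$ of that decomposition in about $n/2$ points while leaving its two neighbours free, so the Section~\ref{S} decomposition cannot be averaged to $2$ per chain, whereas two full levels place one set on each chain. The second decomposition must be arranged so that any nested chain of arcs is instead spread across many of the $\D_i$. For carrying out the bound I expect the cleanest route is to recast the problem on endpoints: form the bipartite multigraph $G$ whose vertex classes are the $n$ possible left endpoints and the $n$ possible right endpoints, with one edge joining $\ell(A)$ to $r(A)$ for each $A\in\A$, so that $\abs{\A}$ is the number of edges of $G$. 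Since each class has at most $n$ non-isolated vertices, it would suffice to show that $G$ has at most one cycle in each connected component, i.e.\ $\abs{\A}\le L+R$ where $L,R$ are the numbers of distinct left and right endpoints used; the two-level extremal family is exactly a disjoint union of cycles, which is consistent with this. The heart of the proof is then to show that the induced-$Y,Y'$-free condition excludes a component of $G$ carrying two independent cycles, and the structural facts above, branching only at the extremes together with the same-endpoint-gives-a-chain property, are what I would use to rule such a configuration out; verifying this exclusion rigorously is where the real work lies.
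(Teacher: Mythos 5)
You have recorded correct and relevant structural facts (arcs sharing an endpoint are nested; forbidding induced $Y$ and $Y'$ means every non-minimal member of $\A$ has its supersets in $\A$ on a chain, dually for subsets), and your bipartite endpoint graph $G$ is a genuinely useful reformulation --- indeed it is the paper's own construction in disguise: the paper's two ``orthogonal'' partitions are the chains $\C_i$ of all arcs with a fixed left endpoint and $\C_i'$ of all arcs with a fixed right endpoint, which are exactly the stars at your left- and right-endpoint vertices, and $\abs{\A\cap\C}$ is exactly a vertex degree in $G$. The problem is that the one step you defer --- showing that induced-$Y,Y'$-freeness forbids a component of $G$ with two independent cycles, which you explicitly flag as ``where the real work lies'' --- is the entire content of the lemma; everything you actually prove is preparatory. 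Your middle paragraph does not close this gap either: the lemma you propose there concerns two \emph{consecutive} chains $\D_i,\D_{i+1}$ of the same decomposition, but adjacency of chains is not the operative interaction in the induced problem (as you yourself note, there is no constant per-chain bound at all); what is needed is the interaction between a chain of one decomposition and the chains of the \emph{orthogonal} one. So, as written, the proposal is a correct reduction with its crucial step unproven, i.e.\ a genuine gap.

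For the record, the missing step can be filled from the facts you already have plus one geometric observation, and it is precisely the paper's key claim. Suppose a vertex of $G$ has degree $k\ge 3$, say arcs $A_1\subset A_2\subset\dots\subset A_k$ sharing their left endpoint $\ell$. Then for each middle index $2\le i\le k-1$, the right-endpoint vertex of $A_i$ has degree $1$: any other arc $B$ there shares $A_i$'s right endpoint $r_i$, hence is comparable to $A_i$; if $B\supset A_i$, then $B$ and $A_{i+1}$ extend $A_i$ on opposite sides of the circle, so $\ell-1\in B\setminus A_{i+1}$ and $r_i+1\in A_{i+1}\setminus B$, making them incomparable, and $A_1\subset A_i\subset B,A_{i+1}$ is an induced $Y$; if $B\subset A_i$, then symmetrically $B$ and $A_{i-1}$ are incomparable subsets of $A_i\subset A_{i+1}$, an induced $Y'$. (The paper phrases this as: a chain of one partition meeting $\A$ in $k>2$ sets forces the orthogonal chains through its $k-2$ middle sets to meet $\A$ in exactly one set.) Given this, your cycle exclusion follows at once: every vertex of the $2$-core of $G$ has degree at most $2$, since the middle edges at any high-degree vertex end in degree-$1$ vertices and are pruned; a $2$-core has minimum degree at least $2$, so it is a disjoint union of cycles, whence no component of $G$ carries two independent cycles and $\abs{\A}=\abs{E(G)}\le\abs{V(G)}\le 2n$. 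Alternatively, one can skip cycles entirely and average degrees as the paper does: each vertex of degree $k\ge 3$ owns $k-2$ private degree-$1$ vertices (the far ends of its middle edges), so the total degree $2\abs{\A}$ is at most twice the number of non-isolated vertices, i.e.\ at most $4n$.
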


\begin{proof}
We will consider a different way of partitioning the chains along $\sigma$  from the one in the proofs of the previous theorems.  Let $\sigma$ be the ordering $x_1,x_2,\dots,x_n,x_1$.  Group the intervals along $\sigma$ into chains $\C_i = \{\{x_i\},\{x_i,x_{i+1}\}, \{x_i,x_{i+1},x_{i+2}\},\dots,\{x_i,x_{i+1},\dots,x_{i+n-1}\}\}$ where $1 \le i \le n$.  Observe that $\{\C_i\}_{i=1}^n$ is a partition of the intervals along $\sigma$.  

We now consider a second way of partitioning the intervals by setting \\  $\C_i' = \{\{x_i\},\{x_i,x_{i-1}\}, \{x_i,x_{i-1},x_{i-2}\},\dots, \{x_i,x_{i-1},\dots, x_{i-n+1}\}\}$ for $1 \le i \le n$.  Observe that $\{\C_i'\}_{i=1}^n$ is again a partition (See Figure \ref{Ortho_decomp}).

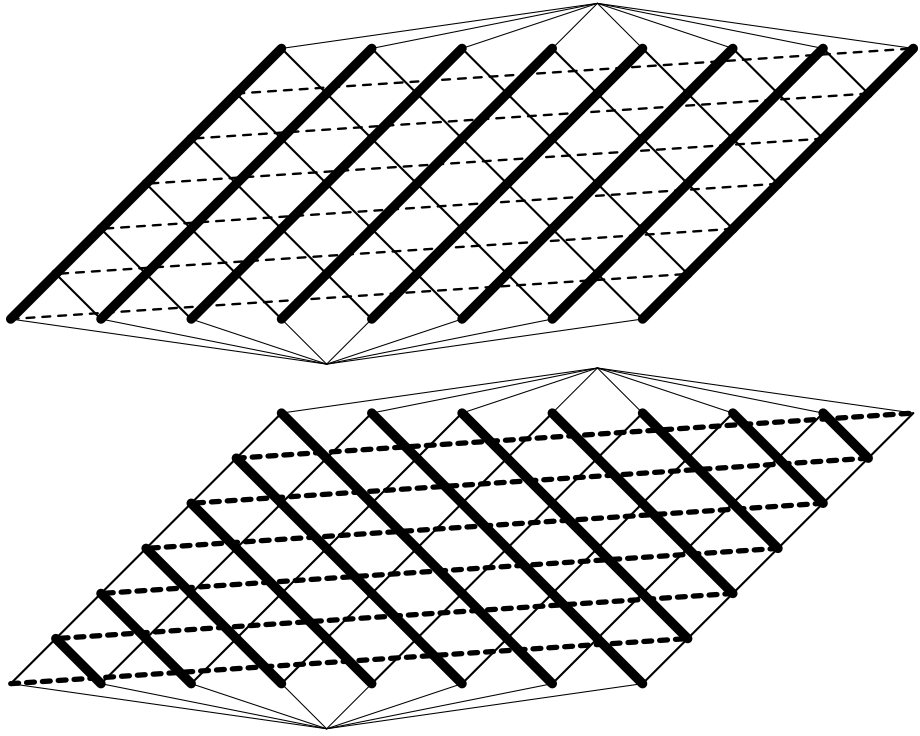
\begin{figure}[h]
\begin{center}
\begin{tikzpicture}[line cap=round,line join=round, x=1.0cm,y=1.0cm, scale = 0.60]

\draw [line width=3.6pt] (1.0,2.0)-- (2.0,3.0);
\draw [line width=0.8 pt] (2.0,3.0)-- (3.0,2.0);
\draw [line width=3.6pt] (4.0,3.0)-- (3.0,2.0);
\draw [line width=0.8 pt] (4.0,3.0)-- (5.0,2.0);
\draw [line width=3.6pt] (5.0,2.0)-- (6.0,3.0);
\draw [line width=0.8 pt] (6.0,3.0)-- (7.0,2.0);
\draw [line width=3.6pt] (7.0,2.0)-- (8.0,3.0);
\draw [line width=0.8 pt ] (8.0,3.0)-- (9.0,2.0);
\draw [line width=3.6pt] (2.0,3.0)-- (3.0,4.0);
\draw [line width=0.8 pt ] (4.0,3.0)-- (3.0,4.0);
\draw [line width=3.6pt] (5.0,4.0)-- (4.0,3.0);
\draw [line width=0.8 pt ] (5.0,4.0)-- (6.0,3.0);
\draw [line width=3.6pt] (6.0,3.0)-- (7.0,4.0);
\draw [line width=0.8 pt ] (7.0,4.0)-- (8.0,3.0);
\draw [line width=3.6pt] (8.0,3.0)-- (9.0,4.0);
\draw [line width=3.6pt] (9.0,2.0)-- (10.0,3.0);
\draw [line width=3.6pt] (3.0,4.0)-- (4.0,5.0);
\draw [line width=0.8 pt ] (5.0,4.0)-- (4.0,5.0);
\draw [line width=3.6pt] (5.0,4.0)-- (6.0,5.0);
\draw [line width=0.8 pt ] (7.0,4.0)-- (6.0,5.0);
\draw [line width=3.6pt] (7.0,4.0)-- (8.0,5.0);
\draw [line width=0.8 pt ] (9.0,4.0)-- (8.0,5.0);
\draw [line width=3.6pt] (10.0,3.0)-- (11.0,4.0);
\draw [line width=3.6pt] (9.0,4.0)-- (10.0,5.0);
\draw [line width=0.8 pt ] (10.0,5.0)-- (11.0,4.0);
\draw [line width=0.8 pt ] (9.0,4.0)-- (10.0,3.0);
\draw [line width=3.6pt] (11.0,4.0)-- (12.0,5.0);
\draw [line width=3.6pt] (4.0,5.0)-- (5.0,6.0);
\draw [line width=0.8 pt ] (6.0,5.0)-- (5.0,6.0);
\draw [line width=3.6pt] (6.0,5.0)-- (7.0,6.0);
\draw [line width=0.8 pt ] (8.0,5.0)-- (7.0,6.0);
\draw [line width=3.6pt] (8.0,5.0)-- (9.0,6.0);
\draw [line width=0.8 pt ] (10.0,5.0)-- (9.0,6.0);
\draw [line width=3.6pt] (10.0,5.0)-- (11.0,6.0);
\draw [line width=0.8 pt ] (11.0,6.0)-- (12.0,5.0);
\draw [line width=3.6pt] (12.0,5.0)-- (13.0,6.0);
\draw [line width=0.8 pt ] (11.0,4.0)-- (12.0,3.0);
\draw [line width=0.8 pt ] (12.0,5.0)-- (13.0,4.0);
\draw [line width=0.8 pt ] (13.0,6.0)-- (14.0,5.0);
\draw [line width=3.6pt] (11.0,2.0)-- (12.0,3.0);
\draw [line width=3.6pt] (12.0,3.0)-- (13.0,4.0);
\draw [line width=3.6pt] (13.0,4.0)-- (14.0,5.0);
\draw [line width=3.6pt] (14.0,5.0)-- (15.0,6.0);
\draw [line width=3.6pt] (13.0,2.0)-- (14.0,3.0);
\draw [line width=3.6pt] (15.0,4.0)-- (14.0,3.0);
\draw [line width=3.6pt] (15.0,4.0)-- (16.0,5.0);
\draw [line width=3.6pt] (16.0,5.0)-- (17.0,6.0);
\draw [line width=0.8 pt ] (15.0,6.0)-- (16.0,5.0);
\draw [line width=0.8 pt ] (14.0,5.0)-- (15.0,4.0);
\draw [line width=0.8 pt ] (13.0,4.0)-- (14.0,3.0);
\draw [line width=0.8 pt ] (12.0,3.0)-- (13.0,2.0);
\draw [line width=3.6pt] (15.0,2.0)-- (16.0,3.0);
\draw [line width=3.6pt] (17.0,4.0)-- (16.0,3.0);
\draw [line width=3.6pt] (17.0,4.0)-- (18.0,5.0);
\draw [line width=3.6pt] (18.0,5.0)-- (19.0,6.0);
\draw [line width=0.8 pt ] (14.0,3.0)-- (15.0,2.0);
\draw [line width=0.8 pt ] (15.0,4.0)-- (16.0,3.0);
\draw [line width=0.8 pt ] (16.0,5.0)-- (17.0,4.0);
\draw [line width=0.8 pt ] (17.0,6.0)-- (18.0,5.0);
\draw [line width=0.8 pt ] (10.0,3.0)-- (11.0,2.0);
\draw [line width=3.6pt] (5.0,6.0)-- (6.0,7.0);
\draw [line width=0.8 pt ] (7.0,6.0)-- (6.0,7.0);
\draw [line width=3.6pt] (7.0,6.0)-- (8.0,7.0);
\draw [line width=0.8 pt ] (9.0,6.0)-- (8.0,7.0);
\draw [line width=3.6pt] (9.0,6.0)-- (10.0,7.0);
\draw [line width=0.8 pt ] (11.0,6.0)-- (10.0,7.0);
\draw [line width=3.6pt] (11.0,6.0)-- (12.0,7.0);
\draw [line width=0.8 pt ] (12.0,7.0)-- (13.0,6.0);
\draw [line width=3.6pt] (13.0,6.0)-- (14.0,7.0);
\draw [line width=0.8 pt ] (14.0,7.0)-- (15.0,6.0);
\draw [line width=3.6pt] (15.0,6.0)-- (16.0,7.0);
\draw [line width=3.6pt] (17.0,6.0)-- (18.0,7.0);
\draw [line width=0.8 pt ] (16.0,7.0)-- (17.0,6.0);
\draw [line width=3.6pt] (19.0,6.0)-- (20.0,7.0);
\draw [line width=0.8 pt ] (18.0,7.0)-- (19.0,6.0);
\draw [line width=3.6pt] (6.0,7.0)-- (7.0,8.0);
\draw [line width=0.8 pt ] (8.0,7.0)-- (7.0,8.0);
\draw [line width=3.6pt] (8.0,7.0)-- (9.0,8.0);
\draw [line width=0.8 pt ] (10.0,7.0)-- (9.0,8.0);
\draw [line width=3.6pt] (10.0,7.0)-- (11.0,8.0);
\draw [line width=0.8 pt ] (12.0,7.0)-- (11.0,8.0);
\draw [line width=3.6pt] (12.0,7.0)-- (13.0,8.0);
\draw [line width=0.8 pt ] (13.0,8.0)-- (14.0,7.0);
\draw [line width=3.6pt] (14.0,7.0)-- (15.0,8.0);
\draw [line width=0.8 pt ] (15.0,8.0)-- (16.0,7.0);
\draw [line width=3.6pt] (16.0,7.0)-- (17.0,8.0);
\draw [line width=3.6pt] (18.0,7.0)-- (19.0,8.0);
\draw [line width=0.8 pt ] (17.0,8.0)-- (18.0,7.0);
\draw [line width=3.6pt] (20.0,7.0)-- (21.0,8.0);
\draw [line width=0.8 pt ] (19.0,8.0)-- (20.0,7.0);
\draw (14.0,9.0)-- (7.0,8.0);
\draw (14.0,9.0)-- (9.0,8.0);
\draw (14.0,9.0)-- (11.0,8.0);
\draw (14.0,9.0)-- (13.0,8.0);
\draw (14.0,9.0)-- (15.0,8.0);
\draw (14.0,9.0)-- (17.0,8.0);
\draw (14.0,9.0)-- (19.0,8.0);
\draw (14.0,9.0)-- (21.0,8.0);
\draw (1.0,2.0)-- (8.0,1.0);
\draw (3.0,2.0)-- (8.0,1.0);
\draw (5.0,2.0)-- (8.0,1.0);
\draw (7.0,2.0)-- (8.0,1.0);
\draw (8.0,1.0)-- (9.0,2.0);
\draw (8.0,1.0)-- (11.0,2.0);
\draw (8.0,1.0)-- (13.0,2.0);
\draw (8.0,1.0)-- (15.0,2.0);
\draw [line width=0.9pt ,dash pattern=on 3pt off 3pt] (20.0,7.0)-- (5.0,6.0);
\draw [line width=0.9pt ,dash pattern=on 3pt off 3pt ] (19.0,6.0)-- (4.0,5.0);
\draw [line width=0.9pt,dash pattern=on 3pt off 3pt ] (18.0,5.0)-- (3.0,4.0);
\draw [line width=0.9pt,dash pattern=on 3pt off 3pt ] (17.0,4.0)-- (2.0,3.0);
\draw [line width=0.9pt,dash pattern=on 3pt off 3pt ] (6.0,7.0)-- (21.0,8.0);
\draw [line width=0.9pt ,dash pattern=on 3pt off 3pt] (1.0,2.0)-- (16.0,3.0);
\begin{scriptsize}

\end{scriptsize}
\end{tikzpicture}

\begin{tikzpicture}[line cap=round,line join=round, x=1.0cm,y=1.0cm, scale = 0.60]

\draw [line width=0.8 pt] (1.0,2.0)-- (2.0,3.0);
\draw [line width=3.6pt] (2.0,3.0)-- (3.0,2.0);
\draw [line width=0.8 pt] (4.0,3.0)-- (3.0,2.0);
\draw [line width=3.6pt] (4.0,3.0)-- (5.0,2.0);
\draw [line width=0.8 pt] (5.0,2.0)-- (6.0,3.0);
\draw [line width=3.6pt] (6.0,3.0)-- (7.0,2.0);
\draw [line width=0.8 pt] (7.0,2.0)-- (8.0,3.0);
\draw [line width=3.6pt ] (8.0,3.0)-- (9.0,2.0);
\draw [line width=0.8 pt] (2.0,3.0)-- (3.0,4.0);
\draw [line width=3.6pt ] (4.0,3.0)-- (3.0,4.0);
\draw [line width=0.8 pt] (5.0,4.0)-- (4.0,3.0);
\draw [line width=3.6pt ] (5.0,4.0)-- (6.0,3.0);
\draw [line width=0.8 pt] (6.0,3.0)-- (7.0,4.0);
\draw [line width=3.6pt ] (7.0,4.0)-- (8.0,3.0);
\draw [line width=0.8 pt] (8.0,3.0)-- (9.0,4.0);
\draw [line width=0.8 pt] (9.0,2.0)-- (10.0,3.0);
\draw [line width=0.8 pt] (3.0,4.0)-- (4.0,5.0);
\draw [line width=3.6pt ] (5.0,4.0)-- (4.0,5.0);
\draw [line width=0.8 pt] (5.0,4.0)-- (6.0,5.0);
\draw [line width=3.6pt ] (7.0,4.0)-- (6.0,5.0);
\draw [line width=0.8 pt] (7.0,4.0)-- (8.0,5.0);
\draw [line width=3.6pt ] (9.0,4.0)-- (8.0,5.0);
\draw [line width=0.8 pt] (10.0,3.0)-- (11.0,4.0);
\draw [line width=0.8 pt] (9.0,4.0)-- (10.0,5.0);
\draw [line width=3.6pt ] (10.0,5.0)-- (11.0,4.0);
\draw [line width=3.6pt ] (9.0,4.0)-- (10.0,3.0);
\draw [line width=0.8 pt] (11.0,4.0)-- (12.0,5.0);
\draw [line width=0.8 pt] (4.0,5.0)-- (5.0,6.0);
\draw [line width=3.6pt ] (6.0,5.0)-- (5.0,6.0);
\draw [line width=0.8 pt] (6.0,5.0)-- (7.0,6.0);
\draw [line width=3.6pt ] (8.0,5.0)-- (7.0,6.0);
\draw [line width=0.8 pt] (8.0,5.0)-- (9.0,6.0);
\draw [line width=3.6pt ] (10.0,5.0)-- (9.0,6.0);
\draw [line width=0.8 pt] (10.0,5.0)-- (11.0,6.0);
\draw [line width=3.6pt ] (11.0,6.0)-- (12.0,5.0);
\draw [line width=0.8 pt] (12.0,5.0)-- (13.0,6.0);
\draw [line width=3.6pt ] (11.0,4.0)-- (12.0,3.0);
\draw [line width=3.6pt ] (12.0,5.0)-- (13.0,4.0);
\draw [line width=3.6pt ] (13.0,6.0)-- (14.0,5.0);
\draw [line width=0.8 pt] (11.0,2.0)-- (12.0,3.0);
\draw [line width=0.8 pt] (12.0,3.0)-- (13.0,4.0);
\draw [line width=0.8 pt] (13.0,4.0)-- (14.0,5.0);
\draw [line width=0.8 pt] (14.0,5.0)-- (15.0,6.0);
\draw [line width=0.8 pt] (13.0,2.0)-- (14.0,3.0);
\draw [line width=0.8 pt] (15.0,4.0)-- (14.0,3.0);
\draw [line width=0.8 pt] (15.0,4.0)-- (16.0,5.0);
\draw [line width=0.8 pt] (16.0,5.0)-- (17.0,6.0);
\draw [line width=3.6pt ] (15.0,6.0)-- (16.0,5.0);
\draw [line width=3.6pt ] (14.0,5.0)-- (15.0,4.0);
\draw [line width=3.6pt ] (13.0,4.0)-- (14.0,3.0);
\draw [line width=3.6pt ] (12.0,3.0)-- (13.0,2.0);
\draw [line width=0.8 pt] (15.0,2.0)-- (16.0,3.0);
\draw [line width=0.8 pt] (17.0,4.0)-- (16.0,3.0);
\draw [line width=0.8 pt] (17.0,4.0)-- (18.0,5.0);
\draw [line width=0.8 pt] (18.0,5.0)-- (19.0,6.0);
\draw [line width=3.6pt ] (14.0,3.0)-- (15.0,2.0);
\draw [line width=3.6pt ] (15.0,4.0)-- (16.0,3.0);
\draw [line width=3.6pt ] (16.0,5.0)-- (17.0,4.0);
\draw [line width=3.6pt ] (17.0,6.0)-- (18.0,5.0);
\draw [line width=3.6pt ] (10.0,3.0)-- (11.0,2.0);
\draw [line width=0.8 pt] (5.0,6.0)-- (6.0,7.0);
\draw [line width=3.6pt ] (7.0,6.0)-- (6.0,7.0);
\draw [line width=0.8 pt] (7.0,6.0)-- (8.0,7.0);
\draw [line width=3.6pt ] (9.0,6.0)-- (8.0,7.0);
\draw [line width=0.8 pt] (9.0,6.0)-- (10.0,7.0);
\draw [line width=3.6pt ] (11.0,6.0)-- (10.0,7.0);
\draw [line width=0.8 pt] (11.0,6.0)-- (12.0,7.0);
\draw [line width=3.6pt ] (12.0,7.0)-- (13.0,6.0);
\draw [line width=0.8 pt] (13.0,6.0)-- (14.0,7.0);
\draw [line width=3.6pt ] (14.0,7.0)-- (15.0,6.0);
\draw [line width=0.8 pt] (15.0,6.0)-- (16.0,7.0);
\draw [line width=0.8 pt] (17.0,6.0)-- (18.0,7.0);
\draw [line width=3.6pt ] (16.0,7.0)-- (17.0,6.0);
\draw [line width=0.8 pt] (19.0,6.0)-- (20.0,7.0);
\draw [line width=3.6pt ] (18.0,7.0)-- (19.0,6.0);
\draw [line width=0.8 pt] (6.0,7.0)-- (7.0,8.0);
\draw [line width=3.6pt ] (8.0,7.0)-- (7.0,8.0);
\draw [line width=0.8 pt] (8.0,7.0)-- (9.0,8.0);
\draw [line width=3.6pt ] (10.0,7.0)-- (9.0,8.0);
\draw [line width=0.8 pt] (10.0,7.0)-- (11.0,8.0);
\draw [line width=3.6pt ] (12.0,7.0)-- (11.0,8.0);
\draw [line width=0.8 pt] (12.0,7.0)-- (13.0,8.0);
\draw [line width=3.6pt ] (13.0,8.0)-- (14.0,7.0);
\draw [line width=0.8 pt] (14.0,7.0)-- (15.0,8.0);
\draw [line width=3.6pt ] (15.0,8.0)-- (16.0,7.0);
\draw [line width=0.8 pt] (16.0,7.0)-- (17.0,8.0);
\draw [line width=0.8 pt] (18.0,7.0)-- (19.0,8.0);
\draw [line width=3.6pt ] (17.0,8.0)-- (18.0,7.0);
\draw [line width=0.8 pt] (20.0,7.0)-- (21.0,8.0);
\draw [line width=3.6pt ] (19.0,8.0)-- (20.0,7.0);
\draw (14.0,9.0)-- (7.0,8.0);
\draw (14.0,9.0)-- (9.0,8.0);
\draw (14.0,9.0)-- (11.0,8.0);
\draw (14.0,9.0)-- (13.0,8.0);
\draw (14.0,9.0)-- (15.0,8.0);
\draw (14.0,9.0)-- (17.0,8.0);
\draw (14.0,9.0)-- (19.0,8.0);
\draw (14.0,9.0)-- (21.0,8.0);
\draw (1.0,2.0)-- (8.0,1.0);
\draw (3.0,2.0)-- (8.0,1.0);
\draw (5.0,2.0)-- (8.0,1.0);
\draw (7.0,2.0)-- (8.0,1.0);
\draw (8.0,1.0)-- (9.0,2.0);
\draw (8.0,1.0)-- (11.0,2.0);
\draw (8.0,1.0)-- (13.0,2.0);
\draw (8.0,1.0)-- (15.0,2.0);
\draw [line width=2pt ,dash pattern=on 3pt off 3pt] (20.0,7.0)-- (5.0,6.0);
\draw [line width=2pt ,dash pattern=on 3pt off 3pt ] (19.0,6.0)-- (4.0,5.0);
\draw [line width=2pt,dash pattern=on 3pt off 3pt ] (18.0,5.0)-- (3.0,4.0);
\draw [line width=2pt,dash pattern=on 3pt off 3pt ] (17.0,4.0)-- (2.0,3.0);
\draw [line width=2pt,dash pattern=on 3pt off 3pt ] (6.0,7.0)-- (21.0,8.0);
\draw [line width=2pt ,dash pattern=on 3pt off 3pt] (1.0,2.0)-- (16.0,3.0);
\begin{scriptsize}

\end{scriptsize}
\end{tikzpicture}

\end{center}
\caption{Orthogonal chain decompositions $\{\C_i\}_{i=1}^n$ (above) and $\{\C_i'\}_{i=1}^n$ (below) of the cycle are highlighted with bold lines. Dashed lines indicate how the chains wrap around.}
\label{Ortho_decomp}
\end{figure}

Now, the two partitions we have defined have the property that if $A$ and $B$ are in $\C_i$ for some $i$, then at most one of $A$ and $B$ are in any $\C_j'$.   Moreover, since each $A$ is contained in exactly one chain in each partition, it follows that each $A$ is contained in exactly $2$ chains in the union of the two partitions.  Thus,  we have 
\begin{displaymath}
\sum_{\C \in \{\C_i\}_{i=1}^n \cup \{C_i'\}_{i=1}^n} \abs{\A \cap \C} = 2 \abs{\A}.
\end{displaymath}
On the other hand, if a chain $\C \in \{\C_i\}_{i=1}^n$ intersects $\A$ in $k> 2$ sets $A_1,A_2,\dots,A_k$ with $A_1\subset A_2 \subset \dots \subset A_k$, then there are $k-2$ chains in $\C' \in \{\C_i'\}_{i=1}^n$ such that $\abs{\A \cap \C'} =1$, namely those chains in $\{\C_i'\}_{i=1}^n$ containing $A_2,A_3,\dots, A_{k-2}$ or $A_{k-1}$,  as an intersection of greater than one would yield an induced $Y$ or $Y'$.   Similarly, if a chain $\C' \in \{\C_i'\}_{i=1}^n$ intersects $\A$ in $k > 2$ sets, then there are $k-2$ chains from $\{\C_i\}_{i=1}^n$ which intersect $\A$ in exactly one set.   
Here, we are using an additional property of the decomposition that if $A \in \C \cap \C'$, then no set larger than $A$ in $\C$ is comparable to a set larger than $A$ in $\C'$, and, similarly, no set smaller than $A$ in $\C$ is comparable to a set smaller than $A$ in $\C'$. 
We have shown that there is a total of $2k-2$ incidences of $\A$ with these $k-1$ chains. It follows that the number of pairs $(A,\C)$ where $A \in \A,  \C \in \{\C_i\}_{i=1}^n \cup \{\C_i'\}_{i=1}^n$ and $A \in \C$ is at most twice the number of chains. Thus,
\begin{displaymath}
\sum_{\C \in \{\C_i\}_{i=1}^n \cup \{C_i'\}_{i=1}^n} \abs{\A \cap \C} \le 2 \abs{ \{\C_i\}_{i=1}^n \cup \{C_i'\}_{i=1}^n} = 4n.
\end{displaymath}
Dividing through by $2$ yields the desired inequality.  
\end{proof}

Lemma \ref{ortho} implies the LYM-type inequality, Theorem \ref{inducedLYM}, exactly as in the previous proofs.  It remains to derive the bound on $\La^\#(n,Y,Y')$ using Theorem \ref{inducedLYM}.

\begin{proof}[Proof of Theorem \ref{induced}]
 If $\A$ contains neither $\varnothing$ nor $[n]$, then we are done by Theorem \ref{inducedLYM}.  If $\varnothing$ and $[n]$ are in $\A$, then $\A \setminus \{\varnothing,[n]\}$ is induced $V$ and $\Lambda$ free.  It follows from  Katona and Tarjan \cite{katona1983extremal} that
 \begin{displaymath}
  \La^\#(n,Y,Y') \le 2 + \La^\#(n,V,\Lambda) = 2 + 2\binom{n-1}{\floor{\frac{n-1}{2}}} \le \Sigma(n,2).
  \end{displaymath}
  Now, assume without loss of generality that $\varnothing \not\in \A$ but $[n] \in \A$, and let $\A' = \A \setminus \{[n]\}$.  Since $\A'$ is induced $Y$ and $Y'$-free,  it satisfies the hypothesis of Theorem \ref{inducedLYM}. Assume, by contradiction, that $\abs{\A'} = \Sigma(n,2)$. It follows that equality holds in Theorem \ref{inducedLYM}. If $n$ is odd, then we have that $\A'  = \binom{[n]}{\floor{n/2}} \cup \binom{[n]}{\ceil{n/2}}$ which implies $\A$ induces a $Y'$, contradiction.  If $n$ is even, then $\binom{[n]}{n/2} \subset \A'$ and $\binom{n}{n/2+1}$ sets from $\binom{[n]}{n/2-1} \cup \binom{[n]}{n/2+1}$  are in $\A$.  Since $\A$ contains no $Y'$, it follows that $\A' \cap \binom{[n]}{n/2+1} = \varnothing$.  Thus, we must have $\A' = \binom{[n]}{n/2 -1} \cup \binom{[n]}{n/2}$, but then $\A$ still contains an induced $Y'$, contradiction.

\end{proof}

\subsection*{Acknowledgements}
We thank D\"{o}m\"{o}t\"{o}r P\'{a}lv\"{o}lgyi for many helpful discussions which were especially useful to the proof of Lemma \ref{ortho}. We also thank D\'{a}niel Gerbner, Guillaume Lagarde, Gyula O.H.Katona, D\'aniel Nagy and Bal\'azs Patk\'os for many useful discussions and Anna Jablonkai for helping with the illustrations.

\end{document}